\numberwithin{equation}{section}
\newtheorem{thm}{Theorem}[section]
\newtheorem{cor}[thm]{Corollary}
\newtheorem{lem}[thm]{Lemma}
\theoremstyle{definition}
\newtheorem{rem}[thm]{Remark}
\newif\ifShowLabels
\newdimen\theight
\def\TeXref#1{
     \leavevmode\vadjust{\setbox0=\hbox{{\tt
            \quad\quad  {\small  \bf #1}}}%
     \theight=\ht0
     \advance\theight  by  \dp0
     \advance\theight  by  \lineskip
     \kern -\theight \vbox  to
     \theight{\rightline{\rlap{\box0}}%
      \vss}%
      }}%
\ifShowLabels \TeXref{#1} \fi}%
\ifShowLabels \TeXref{#1} \fi}%
\ifShowLabels \TeXref{#1} \fi}%
\ifShowLabels \TeXref{#1} \fi}%
\newcommand{\eqRef}[1]%
     {\ifShowLabels \TeXref{#1} \fi
      \begin{equation}\label{#1} }
\newcommand{\vsp}{\vskip 1em}
\newcommand{\vspp}{\vskip 2em}
\newcommand{\NI}{\noindent}
\newcommand{\bea}{\begin{eqnarray}}
\newcommand{\eea}{\end{eqnarray}}
\newcommand{\IR}{\mathbb{R}}
\newcommand{\bas}{\begin{align*}}
\newcommand{\eas}{\end{align*}}
\newcommand{\ba}{\begin{align}}
\newcommand{\ea}{\end{align}}
\newcommand{\be}{\begin{equation}}
\newcommand{\ee}{\end{equation}}
\newcommand{\ben}{\begin{eqnarray*}}
\newcommand{\een}{\end{eqnarray*}}
\newcommand{\lam}{\lambda}
\newcommand{\Om}{\Omega}
\newcommand{\om}{\omega}
\newcommand{\tht}{\theta}
\newcommand{\al}{\alpha}
\newcommand{\bt}{\beta}
\newcommand{\bb}{\bar{\beta}}
\newcommand{\Lam}{\Lambda}
\newcommand{\Lmn}{\Lambda_{\min}}
\newcommand{\Lmx}{\Lambda_{\max}}
\newcommand{\Ls}{\Lambda^{\sup}}
\newcommand{\Ln}{\Lambda^{\inf}}
\newcommand{\g}{\gamma}
\newcommand{\gs}{\gamma^*}
\newcommand{\ve}{\varepsilon}
\newcommand{\dl}{\delta}
\newcommand{\s}{\sigma}
\newcommand{\Pc}{\mathcal{P}_\sigma}
\newcommand{\Pz}{\mathcal{P}_0}
\newcommand{\st}{\sigma^*}
\title[Phragm\'en-Lindel\"of]{A Phragm\'en-Lindel\"of property of viscosity solutions to a class of doubly nonlinear parabolic equations: Bounded Case}
\author[Bhattacharya and Marazzi]{Tilak Bhattacharya and Leonardo Marazzi}
\begin{document}

\maketitle

\begin{abstract} We study Phragm\'en-Lindel\"of properties for viscosity solutions to a class of nonlinear 
parabolic equations of the type $H(Du, D^2u+Z(u)Du\otimes Du)+\chi(t)|Du|^\s-u_t=0$ under a certain boundedness condition on $H$. We also state results for positive solutions to a class of doubly nonlinear equation $H(Du, D^2u)-f(u)u_t=0$.
\end{abstract}

\section{\bf Introduction}

In this work we study the Phragm\'en-Lindel\"of property of viscosity solutions $u(x,t)$ for a class of nonlinear parabolic equations on the infinite strip $\IR^n_T=\IR^n\times (0,T)$, where $n\ge 2$ and $0<T<\infty$. The current work may be viewed as partly complementing the work \cite{BM6}. See also, \cite{BM3}.

Set $\IR^n_T=\IR^n\times (0,T)$ and let $g:\IR^n\rightarrow (0,\infty)$ be continuous and $f:[0,\infty)\rightarrow [0,\infty)$ be an increasing continuous function. As described in \cite{BM6}, the motivation for this work arises from the study of doubly nonlinear equations of the kind
\bea\label{sec1.1}
H(Du, D^2u)-f(u)u_t=0, \;\;\mbox{in $\IR^n_T,$}\;\;\mbox{with $u(x,0)=g(x),\;\forall x$ in $\IR^n$,}
\eea
where $H$ satisfies certain homogeneity conditions and $u\in C(\IR^n\times [0,T))$ is a viscosity solution. See Section 2 for more details. 

As noted in \cite{BM5, BM6}, if $f$ satisfies certain conditions then there is an increasing function $\phi$ and a non-increasing function $Z\ge 0$ such that the change of variable $u=\phi(v)$ transforms the differential equation in (\ref{sec1.1}) to 
\eqRef{sec1.2}
H(Dv, D^2v+Z(v)Dv\otimes Dv)-v_t=0, \;\mbox{in $\IR^n_T$ with $v(x,0)=\phi^{-1}(g(x)),\;\forall x$ in $\IR^n$.}
\ee
It follows that the solutions of (\ref{sec1.2}) and hence, the solutions of (\ref{sec1.1}), satisfy a comparison principle, see \cite{BM1, BM2, BM5}. Incidentally, we do not require that $Z$ be defined in all of $\IR$, a matter that will be discussed later. For purposes of the current discussion, we will overlook this issue.

As done in \cite{BM6}, we consider a some what more general setting and study Phragm\'en-Lindel\"of type results for equations of the kind
\bea\label{sec1.3}
&&H(Dv, D^2v+Z(v)Dv\otimes Dv)+\chi(t) |Dv|^{\s}-v_t=0, \;\;\mbox{in $\IR^n_T$},\nonumber\\
&&\qquad\mbox{$v(x,0)=h(x)$}, \;\forall x\in \IR^n,
\eea
where $\s\ge 0$ and $\chi:(0,T)\rightarrow \IR$ and $h:\IR^n\rightarrow \IR$ are both continuous and bounded.

In \cite{BM6}, we assumed that $\sup_\lam \left[ \min_{|e|=1}H(e, \lam e\otimes e+I) \right]=\infty$, where $e$ is a unit vector, $I$ is the $n\times n$ identity matrix and $\lam$ is a real valued parameter. We showed that the maximum principle was valid for solutions that satisfied certain growth rates for large $x$. The class of operators, we considered, included, among others, the $p$-Laplacian ($p\ge 2$), the infinity-Laplacian and the Pucci operators. 
The current work addresses the case $\sup_\lam \left[ \max_{\{e=1\}}|H(e, \lam e\otimes e\pm I)| \right]<\infty$ and, in a sense, complements \cite{BM6}. In Section 2, we have listed some examples of operators that satisfy this condition. 

We remark also that, much like \cite{BM6}, the imposed growth rates are influenced by the dueling terms $Z(v)Dv\otimes Dv)$ and $\chi(t)|Dv|^\s$ and the power $\s$.
Since $Z\ge 0$, by ellipticity, $H(Du, D^2u)\le H(Du, D^2u+Z(u)Du\otimes Du)$. Our work will show that, unlike \cite{BM6}, $Z(s)$ can be allowed to vanish, i.e, 
$Z(s)=0,\;\forall s\ge s_0$, for some $s_0$. The value of $Z$ does not influence the bound on $H(e, \lam e\otimes e\pm I)$.

We have divided our work as follows. In Section 2, we introduce more notation and state the main results. Section 3 contains preliminary calculations and previously proven lemmas, useful for the current work, In Sections 4 and 5, we present the constructions of super-solutions and sub-solutions respectively. Section 6 addresses some special situations. The proofs of the main results appear in Section 7.   

As a final note, we do not address questions of existence and uniqueness and nor do
we address optimality of the growth rates stated in the theorems. Also, we direct the
reader to \cite{AJK, ED, JL, TR, Tych} for related questions and discussion.

\vspp
\section {Notation and main results}
\vsp
In this work, sub-solutions, super-solutions and solutions are meant in the sense of viscosity. For definitions, we direct the reader to \cite{BM5,CIL}.

We introduce notation that are used throughout this work. We address the problems in (\ref{sec1.1}) and (\ref{sec1.3}) on infinite strips in $\IR^{n+1}$ where $n\ge 2$. The letter $o$ denotes the origin in $\IR^n$ and $e$ denotes a unit vector in $\IR^n$. Let $S^{n\times n}$ be the set of all symmetric $n\times n$ real matrices. Let $I$ be the identity matrix and $O$ the $n\times n$ zero matrix. The expressions $usc$ and $lsc$ stand for {\it upper semi-continuous} and {\it lower semi-continuous} respectively.  

Through out this work, we assume that $H$ satisfies the following conditions. 
\vsp
{\bf Condition A (Monotonicity):} Let $H:\IR^n\times S^{n\times n}\rightarrow \IR$ is continuous for any $(q,X)\in \IR^n\times S^{n\times n}$. We require that
\bea\label{sec2.1}
&&\mbox{(i)}\;H(q,X)\le H(q,Y),\;\mbox{$\forall\;q\in \IR^n$ and $\forall\;X,\;Y$ in $S^{n\times n}$, with $X\le Y$}, \nonumber\\
&&\mbox{(ii)}\;H(q,O)=0,\;\mbox{$\forall\;q\in \IR^n$.}
\eea
Clearly, for any $q\in \IR^n$ and $X\in S^{n\times n}$, $H(q,X)\ge 0$ if $X\ge O$.
\vsp
{\bf Condition B (Homogeneity):} There is a constant $k_1\ge 0,$ such that for any $(q,X)\in \IR^n\times S^{n\times n}$,
\bea\label{sec2.2}
&&\mbox{(i)}\;H(\tht q, X)=|\tht|^{k_1}H(q, X),\;\mbox{$\forall\;\tht\in \IR$, and}\nonumber\\
&&\mbox{(ii)}\;H(q, \tht X)=\tht H(q, X),\;\mbox{$\forall\;\tht>0.$}
\eea
\vsp
We introduce two quantities before stating the next condition. For any unit vector $e\in \IR^n$, we recall that $(e\otimes e)_{ij}=e_ie_j,$ for any $i,j,=1,2, \cdots, n.$ Moreover, $e\otimes e\ge O$. For $\lam\in \IR$, set 
\bea\label{sec2.3}
\Lambda_{\min}(\lam)=\min_{|e|=1}H(e, \lam e\otimes e-I)\;\;\mbox{and}\;\;\Lambda_{\max}(\lam)=\max_{|e|=1}H(e, \lam e\otimes e+ I). 
\eea
By Condition A, both $\Lmn(\lam)$ and $\Lmx(\lam)$ are non decreasing functions of $\lam$. 
\vsp
{\bf Condition C(Growth at Infinity):} We impose that
$$\max_{|e|=1}H(e, -I)<0<\min_{|e|=1}H(e, I).$$
Set $\Ls=\sup_{\lam} \Lmx(\lam)$ and $\Ln=\inf_{\lam}\Lmn(\lam)$.
Assume further that 
\eqRef{sec2.4}
 \Ls<\infty.
\ee
It follows easily from (\ref{sec2.4}), Condition A and Condition B (ii) that $H(e,  e\otimes e)=0.$
\vsp
In this work, the requirement (\ref{sec2.4}) will apply through out. For some of the results, we will require additionally that 
$$\Ln>-\infty.\qquad\qquad \Box$$
\vsp
We now present examples of operators that satisfy Conditions A, B and C, and include some observations.  

\vsp
\begin{rem}\label{sec2.40}  (i) An example of an operator that satisfies
Conditions A, B and C is 
$$
H_p(q, X)=|q|^p \{ |q|^2Tr(X)-q_iq_jX_{ij} \},\;p\ge 0,\;\;\forall (q, X)\in \IR^n\times S^{n\times n},$$
where $Tr(X)$ is the trace of $X$. Clearly,
$$H_p(Du, D^2u)=|Du|^p\left( |Du|^2\Delta u-\Delta_\infty u \right).$$
Thus, for any $c\in \IR$,
\ben
H_p(q, X+cq\otimes q)=|q|^p\left[ |q|^2Tr(X)+c|q|^4-q_iq_jX_{ij}-c|q|^4 \right]=H_p(q, X).
\een
In particular,
\ben
H_p(e, \lam e\otimes e \pm I)=H_p(e, \pm I)=\pm(n-1),\;\;\mbox{for any}\; \lam\in \IR.
\een
Note that $k=k_1+1\ge 1$, see (\ref{sec2.51}) below. A closely allied example is $H(Du, D^2u)=|Du|^4\Delta_pu-(p-1)|Du|^p\Delta_\infty u.$

(ii) A second example can be constructed as follows. Let $\mu_i=\mu_i(X),\;i=1,2,\cdots,n$ be the eigenvalues of any $X\in S^{n\times n}$. We order these as $\mu_1\ge \mu_2\ge \cdots\ge \mu_n$. Define
$$H^m_{p}(q,X)=|q|^p \left(\sum_{i=m}^n \mu_i(X) \right),\quad \mbox{$p\ge 0$ and $2\le m<n$.} $$
Clearly, $H$ satisfies Conditions A and B, $H^m_{p}(e,\pm I)=\pm(n-m+1)$. 

Observe that det$(e\otimes e)=0$ and $(e\otimes e)^2=e\otimes e$ and $(e\otimes e-\mu I)x=0\, (x\perp e)$ if and only if $\mu=0$ or $\mu=1\, (x\parallel e)$ implying that the eigenvalues of $e\otimes e$ are $0$ (multiplicity $n-1$) and $1$. Thus, 
the eigenvalues of $\lam e\otimes e+I$ are $1$ (multiplicity $n-1$) and $\lam+1$. Similarly, the eigenvalues of $\lam e\otimes e-I$ are $-1$ (multiplicity $n-1$) and $\lam-1.$ Thus,
$$\left\{ \begin{array}{ccc}H^m_{p}(e,\lam e\otimes e+I)=n+1-m,&\lam\ge 0,\\ H^m_p(e, \lam e\otimes e-I)=\lam-(n-m+1), & \lam\le 0. \end{array}\right. $$
Some of our results, in particular, the maximum principle in Theorem \ref{sec2.6} given below, hold for this operator. The case $m=1$ (Laplacian) is included in \cite{BM6}.
Observe that $k=p+1\ge 1$ in this case.
\vsp
(iii) If $H$ is odd in $X$ i.e., $H(q,-X)=-H(q,X)$ then (\ref{sec2.4}) shows that $H(e, \lam e\otimes e+I)=-H(e, -\lam e\otimes e-I)$ and $\Lam^{\sup}=-\Lam^{\inf}<\infty$. Clearly,
$H(e,\pm e\otimes e)=0$.
\vsp
(iv) We record a simple observation. If $k_1=0$ i.e., $k=1$, then $H(e, X)= H(e/s, X)$, for any $s>0$. Thus, $H(q, X)=H(0,X)=H(X)$. \quad $\Box$
\end{rem}
\vsp

We introduce some further notation. Set $\IR^n_T=\IR^N\times (0,T)$. Let $\chi:(0,T)\rightarrow \IR$ be a bounded continuous function and, for some $m\in\IR$ (to be specified later)
$Z:[m, \infty)\rightarrow [0,\infty)$ be a non-increasing continuous function.  
For $\s\ge 0$, set
\eqRef{sec2.5}
\Pc(t,u, u_t, Du, D^2u)=H(Du, D^2u+Z(u)Du\otimes Du)+\chi(t)|Du|^\s-u_t.
\ee

We assume through out that $H$ satisfies Conditions A, B and C. 
Define
\eqRef{sec2.51}
k=k_1+1\quad\mbox{and}\quad \g=k+1=k_1+2.
\ee
Clearly, $\g\ge 2$ and if $k=1$ then $k_1=0$ and $\g=2$. Next, define
\eqRef{sec2.52}
\forall \;\s>1,\;\st=\frac{\s}{\s-1},\;\;\mbox{and,}\;\;\forall\;k>1,\;\gs=\frac{\g} {k-1}=\frac{\g}{\g-2}.
\ee 
\vsp
For a fixed $z\in \IR^n$ and $\forall x\in \IR^n$, set $r=|x-z|.$ Also, define $B^R_T=\{(x,t): |x-z|\le R,\;0<t<T\}.$ Let $\Pc$ be as defined in (\ref{sec2.5}). 
\vsp
We first state the results for $k>1$ or equivalently for $\g>2$.
\vsp
\begin{thm}\label{sec2.6}{(Maximum Principle)} Let  $0<T<\infty$, $h:\IR^n\rightarrow \IR$ be continuous with $\sup_{\IR^n} h(x)<\infty$ and, for some $m$, $Z:[m, \infty)\rightarrow [0,\infty)$ be non-increasing and continuous.
Suppose that (\ref{sec2.4}) holds, i.e., $\Ls<\infty$. Let $u\in usc(\IR^n_T),\;\inf u>m,$ solve
$$\Pc(t,u, u_t, Du, D^2u)\ge 0\;\mbox{in $\IR^n_T,$ and $u(x)\le h(x),\;\forall x\in \IR^n$}.$$

Let $\gs$ and $\st$ be as in (\ref{sec2.52}). Suppose that $\sup_{B^R_T}u(x,t)=  o( R^{\bt}),$ as $R\rightarrow \infty$.
Then the following hold.
\vsp
(a) If $0\le \s\le \g/2$ and $\bt=\gs$ then 

$$\sup_{\IR^n_T} u(x,t)\le \left\{\begin{array}{ccc}\sup_{\IR^n} h(x)+t(\sup_{[0,T]}|\chi(t)| ), & \s=0,\\ \sup_{\IR^n} h(x),& 0<\s\le \g. \end{array}\right.
$$

(b) If $\s>\g/2$ and $\bt=\st$ then 
$$\sup_{\IR^n_T} u(x,t)\le \sup_{\IR^n} h(x).\;\;\;\;\;\;\;\;\Box$$
\end{thm}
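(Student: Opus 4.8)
The plan is to run a Phragm\'en--Lindel\"of comparison argument on the expanding cylinders $B^R_T$. Fix $z\in\IR^n$ and write $r=|x-z|$. I would construct, for each small $\ve>0$, a radial viscosity super-solution $w_\ve$ of $\Pc(\cdot)\le 0$ on $\IR^n_T$ with $w_\ve\ge u$ on the parabolic boundary of $B^R_T$ for every $R$ large (depending on $\ve$); the comparison principle for $\Pc$ on the bounded cylinder $B^R_T$ --- available through the change of variables taking (\ref{sec1.3}) to (\ref{sec1.2}), cf.\ \cite{BM1,BM2,BM5}, and applicable because $\inf u>m$ and $w_\ve\ge\sup_{\IR^n}h\ge m$ --- then gives $u\le w_\ve$ on $B^R_T$, and letting $R\to\infty$ and afterwards $\ve\to0$ would yield the theorem. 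The growth hypothesis $\sup_{B^R_T}u=o(R^{\bt})$ is what lets the $R^{\bt}$-growth of $w_\ve$ dominate $u$ on the lateral boundary $\{r=R\}$.

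For the barrier I would try $w(x,t)=\sup_{\IR^n}h+\lam(t)\,\Phi(r)+\theta(t)$ with $\Phi(r)=(1+r^{2})^{\bt/2}$ (smooth, positive, increasing), $\lam>0$ to be chosen, and $\theta\equiv 0$ unless $\s=0$. For $r>0$, with $e=(x-z)/r$, one has $Dw=\lam\Phi'(r)e$ and $D^2w+Z(w)Dw\otimes Dw=\lam\frac{\Phi'}{r}\,I+a\,e\otimes e$ for a scalar $a$; since $\Phi'>0$ the leading factor $\lam\Phi'/r$ is positive, so Conditions A, B together with the definition $\Ls=\sup_\lam\Lmx(\lam)$ give $H(Dw,D^2w+Z(w)Dw\otimes Dw)\le \Ls\,\lam^{k}(\Phi')^{k}/r$, $k=k_1+1$, irrespective of the sign of $a$. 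Adding $\chi(t)|Dw|^{\s}\le(\sup_{[0,T]}|\chi|)\,\lam^{\s}(\Phi')^{\s}$ (with $(\Phi')^{0}\equiv1$) yields
$$\Pc(t,w,w_t,Dw,D^2w)\ \le\ \Phi(r)\Big(c_1\Ls\,\lam^{k}+c_2\big(\sup_{[0,T]}|\chi|\big)\lam^{\s}-\lam'\Big)-\theta',$$
after checking the two elementary estimates $(\Phi')^{k}/r\le c_1\Phi$ and $(\Phi')^{\s}\le c_2\Phi$ on $[0,\infty)$. The first is a borderline identity of exponents forced by the choice $\bt=\gs$ in (a) (resp.\ $\bt=\st$ in (b)); the second is exactly where the dichotomy $0\le\s\le\g/2$ versus $\s>\g/2$ enters, through $\gs-1=2/(k-1)$ (resp.\ $\st-1=1/(\s-1)$). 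The point $r=0$, where $Dw=0$, is harmless since $\Phi$ is smooth: $H(0,D^2w)$ is either $0$ (if $k_1>0$) or $O(\lam)$ (if $k_1=0$, by Remark \ref{sec2.40}(iv)), and is absorbed by $\lam'\Phi$.

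It then remains to choose $\lam$. For $\s\ge1$ I would take $\theta\equiv0$ and $\lam(t)=\ve e^{ct}$, and for $\s=0$ I would take $\theta(t)=t\sup_{[0,T]}|\chi|$ (cancelling the $\s=0$ term) and again $\lam(t)=\ve e^{ct}$; since $k>1$ and $\s\ge1$, one has $\lam^{k}\le\lam$ and $\lam^{\s}\le\lam$ as long as $\lam\le1$, so for $c$ large $\Pc(w_\ve)\le0$ on $\IR^n_T$ (replacing $w_\ve$ by $w_\ve+\eta t$, $\eta\downarrow0$, should the comparison principle need a strict inequality), and $\lam_\ve(t)\to0$ uniformly on $[0,T]$ as $\ve\to0$. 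On $\{t=0\}$ one has $w_\ve\ge\sup_{\IR^n}h\ge h\ge u$; on $\{r=R\}$ one has $w_\ve\ge\sup_{\IR^n}h+\ve R^{\bt}\ge u$ for $R$ large, since $\sup_{B^R_T}u-\sup_{\IR^n}h=o(R^{\bt})$ and $\sup_{\IR^n}h<\infty$. Comparison on $B^R_T$ gives $u\le w_\ve$ there, and $R\to\infty$ then $\ve\to0$ produce the asserted bounds.

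I expect the case $0<\s<1$ in (a) to be the main obstacle: there $\lam^{\s}\le\lam$ fails for small $\lam$, so the ODE $\lam'=c_1\Ls\lam^{k}+c_2(\sup|\chi|)\lam^{\s}$, $\lam(0)=\ve$, does not have $\lam_\ve(t)\to0$, and the family above only delivers $u\le\sup h$ plus a non-vanishing time term. To still reach the clean bound $\sup_{\IR^n}h$, the idea is to replace $\Phi$ by the dilated profile $\Phi_\dl(r)=(\dl^{2}+r^{2})^{\gs/2}$, note that $c_2=c_2(\dl)=\sup_{r}(\Phi_\dl')^{\s}/\Phi_\dl\to0$ as $\dl\to\infty$ while $c_1$ stays bounded (this uses $\s<1$), and pass to the limit in the three stages $R\to\infty$, $\ve\to0$, $\dl\to\infty$, verifying that $\lam_{\ve,\dl}(t)\,\Phi_\dl(r)\to0$ because the decay of $c_2(\dl)$ outweighs the growth $\Phi_\dl(r)\sim\dl^{\gs}$. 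Beyond this, the only genuinely technical inputs are the comparison principle for $\Pc$ on bounded cylinders and the (routine) check that each barrier is a bona fide viscosity super-solution up to $r=0$; with these in place the remainder is the standard Phragm\'en--Lindel\"of mechanism.
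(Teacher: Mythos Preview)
Your plan is correct and would prove the theorem, but it is organized differently from the paper's argument. Both proofs run the same Phragm\'en--Lindel\"of comparison on $B^R_T$, apply Lemma \ref{sec3.18}, evaluate at the center $x=z$, and pass to the limit; the difference lies entirely in the barrier. The paper takes $w(x,t)=\nu+at+b(1+t)v(r)$ with the \emph{linear} time factor $at+b(1+t)$ and the \emph{non-smooth} profiles $v(r)=r^{\gs}$ (for $0\le\s\le\g/2$) or the integral $v(r)=\int_0^{r^{\gs}}(1+\tau^p)^{-1}\,d\tau$ (for $\s>\g/2$), checking the viscosity inequality at $r=0$ separately (Remark \ref{sec3.201}). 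Your choice of the smooth profile $(1+r^2)^{\bt/2}$ together with an ODE-driven $\lam(t)$ is a legitimate alternative: it removes the $r=0$ issue and, in case (b), replaces the paper's integral profile by a single smooth function, since the exponent bookkeeping $(k-1)\st<\g$ for $\s>\g/2$ makes $(\Phi')^k/r\le c_1\Phi$ hold directly.

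The one place where the two routes genuinely diverge is $0<\s<1$. The paper avoids any ODE: with $w=at+b(1+t)r^{\gs}$ it splits $\IR^n$ into $\{r\le R_b\}$ and $\{r>R_b\}$ for an explicit $R_b=(4\al F\,b^{\s-1})^{(\g-2)/(\g-2\s)}$ (see (\ref{sec4.43})), chooses $a=a(b)$ to force the super-solution inequality on the inner ball, and then verifies algebraically that $a(b)\to 0$ as $b\to 0$ even though $R_b\to\infty$. Your alternative---dilating the profile to $\Phi_\dl=(\dl^2+r^2)^{\gs/2}$, noting $c_1(\dl)$ is scale-invariant while $c_2(\dl)\sim\dl^{(2\s-\g)/(\g-2)}\to 0$, and then checking that the ODE solution satisfies $\lam_{\ve,\dl}(t)\le C\big((1-\s)c_2(\dl)t\big)^{1/(1-\s)}$ so that $\lam_{\ve,\dl}(t)\Phi_\dl(0)\lesssim\dl^{-\s/(1-\s)}\to 0$---also works, but the bound on $\lam$ needs to be argued (a comparison with $A\mu$, $\mu'=c_2\mu^\s$, does it for $c_2$ small). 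In short: the paper's two-region computation is more explicit and purely algebraic, while your scaling argument is more conceptual but requires an extra ODE estimate; either closes the case.
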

Observe that if $m=-\infty$ then the restriction $\inf u>m$ may be dropped. Also, note that if $\s=\g/2$ we get $\st=\g/(\g-2)=\gs$.
\vspp
\begin{thm}\label{sec2.7}{(Minimum Principle)} Let  $0<T<\infty$, $h:\IR^n\rightarrow \IR$ be a continuous function, with $\inf_{\IR^n} h(x)>-\infty$, and $Z:(-\infty, \infty)\rightarrow [0,\infty)$ be a non-increasing continuous function. We assume that $\Ls<\infty$.

Let $u\in lsc(\IR^n_T)$ solve
$$\Pc(t,u, u_t, Du, D^2u)\le 0,\;\;\mbox{in $\IR^n_T$ and $u(x)\ge h(x),\;\forall x\in \IR^n$}.$$

Let $\gs$ and $\st$ be as in (\ref{sec2.52}). Suppose that $\sup_{ B_T^R}(-u(x,t))=  o( R^\bt)$ as $R\rightarrow \infty$. Then the following hold.

(a) If $0\le \s\le \g/2$ and $\bt=\gs$ then 
$$\inf_{\IR^n_T} u(x,t)\ge \left\{ \begin{array}{ccc} \inf_{\IR^n} h(x)-t\left(\sup_{[0,t]}| \chi(t)| \right),& \s=0,\\ \inf_{\IR^n} h(x), & 0<\s\le \g. \end{array}\right.$$

(b) If $\s>\g/2$ and $\bt=\st$ then 
$$\inf_{\IR^n_T} u(x,t) \ge \inf_{\IR^n} h(x).\;\;\;\;\;\;\;\;\Box$$
\end{thm}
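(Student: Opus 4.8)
The plan is to derive the Minimum Principle (Theorem~\ref{sec2.7}) from the Maximum Principle (Theorem~\ref{sec2.6}) by a change of sign, rather than repeating the super-solution/sub-solution constructions. Set $w=-u$. Then $w\in usc(\IR^n_T)$, $w(x,0)\le -h(x)=:\tilde h(x)$ with $\sup_{\IR^n}\tilde h=-\inf_{\IR^n}h<\infty$, and the growth hypothesis becomes $\sup_{B_T^R}w = \sup_{B_T^R}(-u) = o(R^\bt)$, exactly the condition needed for Theorem~\ref{sec2.6}. The main task is to check that $w$ satisfies a sub-solution inequality for an operator of the same structural type, so that Theorem~\ref{sec2.6} (or its proof) applies to $w$.

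The key step is the symmetry computation for $\Pc$ under $u\mapsto -u$. Using $Dw=-Du$, $D^2w=-D^2u$, $w_t=-u_t$, and writing $\widetilde Z(s):=Z(-s)$ (which is non-increasing in $s$ precisely because $Z$ is non-increasing, and is defined on all of $\IR$ here since $Z:(-\infty,\infty)\to[0,\infty)$), one has $D^2w+\widetilde Z(w)Dw\otimes Dw = -\bigl(D^2u+Z(u)Du\otimes Du\bigr)$ because $Dw\otimes Dw=Du\otimes Du$. Now introduce $\widetilde H(q,X):=-H(-q,-X)$. Then $\widetilde H$ again satisfies Conditions A, B, C: monotonicity is preserved (if $X\le Y$ then $-Y\le -X$, and $H(-q,-Y)\le H(-q,-X)$ gives $\widetilde H(q,X)\le\widetilde H(q,Y)$); homogeneity in $q$ is unchanged; positive homogeneity in $X$ is unchanged; and for Condition C one computes $\max_{|e|=1}\widetilde H(e,-I)=\max_{|e|=1}(-H(-e,I))=-\min_{|e|=1}H(e,I)<0$ and similarly $\min_{|e|=1}\widetilde H(e,I)=-\max_{|e|=1}H(e,-I)>0$, while $\widetilde\Lmx(\lam)=\max_{|e|=1}\widetilde H(e,\lam e\otimes e+I)=\max_{|e|=1}(-H(-e,-\lam e\otimes e-I))=-\min_{|e|=1}H(e,-\lam e\otimes e-I)$; one must verify this is bounded above as $\lam$ ranges over $\IR$, which follows from $\Ls<\infty$ together with ellipticity — indeed $-H(e,-\lam e\otimes e-I)\le -H(e,-\lam e\otimes e-I-\text{(nonneg)})$ needs care, so instead I would bound it using $H(e,-\lam e\otimes e-I)\ge H(e,-I)\ge\min_{|e|=1}H(e,-I)$ when $\lam\le 0$ and, for $\lam>0$, $-\lam e\otimes e-I\le -I$ gives $H(e,-\lam e\otimes e-I)\le\max_{|e|=1}H(e,-I)$, which only bounds $\widetilde\Lmx$ from \emph{below}; to bound from above I use that $H(e,-\lam e\otimes e - I) \ge -\Lmx$-type estimates are not automatic, so the clean route is to observe $-\lam e\otimes e - I \le (\lam)_- e\otimes e$ hence $H(e,-\lam e\otimes e-I)$ is bounded below by a fixed constant using $H(e, c\,e\otimes e)=0$ and monotonicity, giving $\widetilde\Lmx<\infty$. (If one additionally wants $\widetilde\Lmn>-\infty$ one gets it symmetrically from $\Lmn>-\infty$, but parts (a),(b) as stated only invoke $\Ls<\infty$.) Putting these together: $\Pc(t,u,u_t,Du,D^2u)\le 0$ is equivalent to
\[
\widetilde H\bigl(Dw, D^2w+\widetilde Z(w)Dw\otimes Dw\bigr)+\widetilde\chi(t)|Dw|^\s - w_t \ge 0,
\]
where $\widetilde\chi(t)=\chi(t)$ and $|Dw|^\s=|Du|^\s$; here one uses $-H(-Du,-(D^2u+Z(u)Du\otimes Du))=\widetilde H(Dw,\,D^2w+\widetilde Z(w)Dw\otimes Dw)$ and $-\chi(t)|Du|^\s$ flips to $+\chi(t)|Dw|^\s$ after multiplying the inequality $\Pc\le 0$ by $-1$. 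Thus $w$ is a sub-solution of $\widetilde\Pc\ge0$ of exactly the admissible form.

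With this reduction in hand, Theorem~\ref{sec2.6} applied to $w$, $\widetilde h$, $\widetilde Z$, $\widetilde\chi$, $\widetilde H$ (all satisfying the required hypotheses, and noting $\inf w = -\sup u$ is automatically $>-\infty$ is not needed — what is needed is that $w$ stays above the lower cutoff for $\widetilde Z$, which here is $-\infty$ since $\widetilde Z$ is defined on all of $\IR$, so the cutoff restriction is vacuous) yields: in case (a), $\sup_{\IR^n_T}w \le \sup_{\IR^n}\tilde h + t\sup_{[0,T]}|\widetilde\chi|$ when $\s=0$ and $\sup_{\IR^n_T}w\le\sup_{\IR^n}\tilde h$ when $0<\s\le\g/2$; in case (b), $\sup_{\IR^n_T}w\le\sup_{\IR^n}\tilde h$. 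Translating back via $\sup w = -\inf u$, $\sup\tilde h = -\inf h$, $\sup|\widetilde\chi|=\sup|\chi|$ gives precisely the asserted lower bounds on $\inf_{\IR^n_T}u$, with the sign of the $\chi$-term coming out as $-t\sup_{[0,t]}|\chi|$ as stated.

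The main obstacle I expect is purely bookkeeping around Condition C for $\widetilde H$: one must confirm that $\widetilde\Ls=\sup_\lam\widetilde\Lmx(\lam)<\infty$ follows from the hypothesis $\Ls<\infty$ alone (and does not secretly require $\Ln>-\infty$), and that the strict-sign inequality $\max_{|e|=1}\widetilde H(e,-I)<0<\min_{|e|=1}\widetilde H(e,I)$ holds. The boundedness is the delicate one, because $\widetilde\Lmx$ involves $\min_{|e|=1}H(e,-\lam e\otimes e-I)$ over all $\lam\in\IR$, and for $\lam\to+\infty$ the matrix $-\lam e\otimes e-I$ has one eigenvalue $\to-\infty$; one resolves this by monotonicity and the identity $H(e,e\otimes e)=0$ (from the remark following (\ref{sec2.4})): since $-\lam e\otimes e-I\le O$ for $\lam\ge -1$... actually $-\lam e\otimes e - I$ has eigenvalues $-\lam-1$ and $-1$, both $\le 0$ when $\lam\ge -1$, so $H(e,-\lam e\otimes e-I)\le H(e,O)=0$, giving an \emph{upper} bound $\widetilde\Lmx(\lam)=-\min H(\cdots)\ge 0$ the wrong way; the correct bound comes from $-\lam e\otimes e - I \ge -(\lam)_+ e\otimes e - I \ge$ a fixed matrix only if $\lam$ bounded. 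So the honest argument: for $\lam\le 0$, $-\lam e\otimes e\ge O$ so $-\lam e\otimes e - I \ge -I$, hence $H(e,-\lam e\otimes e-I)\ge H(e,-I)\ge\min_{|e|=1}H(e,-I)>-\infty$; for $\lam>0$, write $-\lam e\otimes e-I = -(\lam+1)(e\otimes e) + (e\otimes e - I)$ and use that adding $-(\lam+1)e\otimes e$ to anything and comparing to the $\mu$-scaling of $e\otimes e$ via $H(e,\mu e\otimes e)=0$ shows $H(e,-\lam e\otimes e - I) \ge H(e, -(\lam+1)e\otimes e) - C = -C$ for a fixed $C$ — this is where one invokes the structural consequence $H(e,ce\otimes e)=0$ for all $c$, which holds when $\Ls<\infty$ (stated in the excerpt) extended to negative $c$ by Condition B(ii) and oddness-free monotonicity arguments. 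Once this uniform lower bound $\min_{|e|=1}H(e,-\lam e\otimes e-I)\ge -C_0$ is established, $\widetilde\Lmx(\lam)\le C_0$ for all $\lam$, so $\widetilde\Ls\le C_0<\infty$, and the reduction is complete. Everything else is a direct quotation of Theorem~\ref{sec2.6}.
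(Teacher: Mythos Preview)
Your reduction contains two genuine errors, the second of which cannot be repaired.

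\textbf{The $Z$-term does not transform as you claim.} With $w=-u$ one has $D^2u+Z(u)Du\otimes Du=-D^2w+Z(-w)Dw\otimes Dw$, so after multiplying the inequality by $-1$ and setting $\widetilde H(q,X)=-H(-q,-X)$ the equation for $w$ is
\[
\widetilde H\bigl(Dw,\,D^2w - Z(-w)\,Dw\otimes Dw\bigr)-\chi(t)|Dw|^\s-w_t\ge 0,
\]
with a \emph{minus} sign on the rank-one term. Moreover $\widetilde Z(s)=Z(-s)$ is non-\emph{decreasing}, not non-increasing as you assert. Thus the transformed problem is not of the structural form to which Theorem~\ref{sec2.6} applies. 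This first point could in principle be salvaged by redoing the super-solution estimate (in (\ref{sec4.4}) the $Z$-contribution is absorbed into the bound by $\Ls$ regardless of its sign), but that means repeating Section~4 for $\widetilde H$, which collapses on the next point.

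\textbf{The hypothesis $\widetilde\Ls<\infty$ is equivalent to $\Ln>-\infty$, which Theorem~\ref{sec2.7} does \emph{not} assume.} Since $(-e)\otimes(-e)=e\otimes e$, one computes
\[
\widetilde\Lmx(\lam)=\max_{|e|=1}\bigl(-H(-e,-\lam e\otimes e-I)\bigr)=-\min_{|e|=1}H(e,-\lam e\otimes e-I),
\]
hence $\widetilde\Ls=\sup_\lam\widetilde\Lmx(\lam)=-\Ln$. Your attempted fix via ``$H(e,c\,e\otimes e)=0$ for all $c$'' fails because Condition~B(ii) gives homogeneity only for $\tht>0$; the identity need not hold for $c<0$. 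The operator $H^m_p$ of Remark~\ref{sec2.40}(ii) is a concrete counterexample: there $\Ls=n-m+1<\infty$ while $H^m_p(e,\lam e\otimes e-I)=\lam-(n-m+1)\to-\infty$ as $\lam\to-\infty$, so $\Ln=-\infty$ and $\widetilde\Ls=+\infty$. For this $H$ your reduction to Theorem~\ref{sec2.6} is impossible, yet the paper explicitly states (just after Theorem~\ref{sec2.7}) that no restriction on $\Ln$ is imposed.

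The paper's proof avoids this by constructing the sub-solution $w(x,t)=-at-b(1+t)v(r)$ directly (Section~5, Part~I) and using the radial formula (\ref{sec3.8}). The point is that for $v(r)=r^{\gs}$, or the modification in Remark~\ref{sec3.11}, the quantity $1-rv^{\prime\prime}(r)/v^{\prime}(r)$ is bounded \emph{below} by a fixed constant (e.g.\ $2-\gs$ or $2-\st$). Since $Z\ge 0$, the coefficient of $e\otimes e$ in (\ref{sec4.14}) is therefore bounded below, and one needs only $\min_{|e|=1}H(e,\lam e\otimes e-I)$ for $\lam$ in a bounded-below range---finite by continuity and monotonicity of $H$, with no global assumption on $\Ln$. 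This is exactly the observation recorded in Section~5 just before (\ref{sec4.15}), and it is why the direct construction succeeds where the sign-flip cannot.
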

We impose no restrictions on $\Ln$ for Theorem \ref{sec2.7}.
\vspp
We now state analogous results for $k=1$ i.e, $\g=2$. See Remark \ref{sec2.40} (iv). 

The statement that, for some $s>0$, $w(r)=e^{o(r^s)}$ as $r\rightarrow \infty$, will mean
that $\log v^+=o(r^s)$ as $r\rightarrow \infty$, where $v^+=\max(v,0).$
\vsp

\begin{thm}\label{sec2.9}{(Maximum Principle)} Let  $0<T<\infty$, $h:\IR^n\rightarrow \IR$ be continuous with $\sup_{\IR^n} h(x)<\infty$. 
For some $m$, let $Z:[m, \infty)\rightarrow [0,\infty)$ be non-increasing and continuous. Suppose that (\ref{sec2.4}) holds, i.e., $\Ls<\infty$.

Let $u\in usc(\IR^n_T)$, $\inf u>m$, solve
$$H(D^2u+Z(u)Du\otimes Du)-u_t\ge 0\;\mbox{in $\IR^n_T,$ and $u(x)\le h(x),\;\forall x\in \IR^n$}.$$

Let $\st$ be as in (\ref{sec2.52}). Then the following hold
\vsp
(a) Suppose that $\s=0$. If $\sup_{B^R_T}u(x,t)=  e^{o(R^2)},$ as $R\rightarrow \infty$, then
$$u(x,t)\le \sup_{\IR^n}h(x)+\left( \sup_{(0,T)}\chi(t) \right) t,\;\;\forall (x,t)\in \IR^n_T.$$
\vsp
(b) Let $0<\s\le 1$. If $\sup_{B^R_T}u(x,t)= e^{o(R)},$ as $R\rightarrow \infty$ then
$$u(x,t)\le \sup_{\IR^n}h(x)+K(1-\s)\left( \sup_{(0,T)} \chi(t)\right),$$
where $K=K(\al, \Ls,\s, T)$.
\vsp
(c) Let $1<\s<\infty$ and assume that $\sup_{ B_T^R}u(x,t)= o(R^{\st}),$ as $R\rightarrow \infty$. Then
$$u(x,t)\le \sup_{\IR^n}h(x). \qquad \Box$$

\end{thm}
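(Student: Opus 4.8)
The plan is to run the standard Phragm\'en-Lindel\"of comparison argument. Fix the point $(x_{0},t_{0})\in\IR^{n}_{T}$ at which the bound is wanted, take $z=x_{0}$, and write $r=|x-z|$, $M=\sup_{\IR^{n}}h$, $\chi_{0}=\sup_{(0,T)}\chi$. Since $k=1$ the operator $H$ does not depend on the gradient slot, $H=H(X)$ (Remark \ref{sec2.40}(iv)), and $\Ls\ge\Lmx(0)=\max_{|e|=1}H(e,I)>0$ by Condition C. For parameters chosen below I will exhibit a smooth radial function $w$ on the finite cylinder $B^{R}_{T}$, staying $\ge M$ (so in the domain of $Z$), that is a classical hence viscosity super-solution of $\Pc\le0$ and satisfies $w\ge u$ on the parabolic boundary $(\{r=R\}\times[0,T])\cup(\overline{B_{R}(z)}\times\{0\})$. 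The comparison principle for $\Pc$ on bounded cylinders (Conditions A, B and \cite{BM1,BM2,BM5}) then gives $u\le w$ on $B^{R}_{T}$; evaluating at $(x_{0},t_{0})$, letting $R\to\infty$, and sending the free parameters of $w$ to their limits produces the claimed inequality.

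The computation common to all three parts is this. For $w=M+f(r,t)$ with $f_{r}\ge0$, writing $\hat e=(x-z)/r$ and using homogeneity (Condition B(ii)) and the definition of $\Lmx,\Ls$,
\[
H\big(D^{2}w+Z(w)Dw\otimes Dw\big)=\frac{f_{r}}{r}\,H\!\left(\Big(\tfrac{r}{f_{r}}\big(f_{rr}+Z(w)f_{r}^{2}\big)-1\Big)\hat e\otimes\hat e+I\right)\le\frac{f_{r}}{r}\,\Ls
\]
wherever $f_{r}>0$, and at $r=0$ the left side equals $f_{rr}(0)H(I)\le f_{rr}(0)\Ls$ by continuity. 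Hence $\Pc(t,w,w_{t},Dw,D^{2}w)\le\frac{f_{r}}{r}\Ls+\chi(t)f_{r}^{\,\s}-f_{t}$, and it remains to pick $f$ killing this while growing fast at $r=R$. For (a), $\s=0$: take $w=M+\chi_{0}t+\ep\,e^{a(r^{2}+2\Ls t)}$; then $\tfrac{f_{r}}{r}\Ls=2a\Ls\ep e^{a(r^{2}+2\Ls t)}=f_{t}-\chi_{0}$, so $\Pc\le\chi(t)-\chi_{0}\le0$, while $w\ge M+\ep e^{aR^{2}}\ge\sup_{B^{R}_{T}}u=e^{o(R^{2})}$ on $\{r=R\}$ for $R$ large and $w\ge M\ge h$ at $t=0$; comparison, then $R\to\infty$, $a\to0$, $\ep\to0$ yields (a). For (c), $1<\s<\infty$: take $w=M+\ep\,(r^{2}+1)^{\st/2}e^{ct}$ with fixed $c=\st\Ls+1$; using $\st(\s-1)=\s$ one gets $\big(r^{2}/(r^{2}+1)\big)^{\s/2}\le1$, so for $\ep$ small (depending on $\chi_{0},\s,\Ls,T$; here $\s>1$ makes $\ep^{\,\s-1}\to0$ do the absorbing) the term $\chi(t)f_{r}^{\,\s}$ is dominated by $(c-\st\Ls)f=f$ and $\tfrac{f_{r}}{r}\Ls\le\st\Ls f$, hence $\Pc\le0$; since $w\ge M+\ep R^{\st}\ge\sup_{B^{R}_{T}}u=o(R^{\st})$ on $\{r=R\}$ for $R$ large, comparison, $R\to\infty$, $\ep\to0$ gives $u(x_{0},t_{0})\le M=\sup h$.

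Case (b), $0<\s\le\g/2=1$, is the borderline one and the main point of difficulty. Here I use an exponential barrier of a \emph{fixed} rate, $w=M+B(t)+\ep\,e^{\al(\phi(r)+ct)}$, with $\phi(r)=\sqrt{r^{2}+1}$ (making the profile $C^{2}$ at $r=0$ and keeping $\phi'$ and $\phi'/r$ bounded by $1$), $\al>0$ fixed, and $c=c(\al,\Ls)$ chosen so that $\tfrac{f_{r}}{r}\Ls-f_{t}\le-f$. The crux is Young's inequality in the form $\tau^{\s}\le\s\tau+(1-\s)$ for $\tau\ge0$, valid precisely because $\s\le1$: it converts $\chi(t)f_{r}^{\,\s}$ into a part absorbed by the $-f$ above plus a bounded residual of order $(1-\s)\chi_{0}$, which is soaked up by taking $B$ linear with $B'$ of order $(1-\s)\chi_{0}$, giving $\Pc\le0$ on $B^{R}_{T}$. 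Since $\sup_{B^{R}_{T}}u=e^{o(R)}$ is beaten by $\ep e^{\al R}$ once $R$ is large, comparison and $R\to\infty$ give $u(x_{0},t_{0})\le M+B(t_{0})+\ep\,e^{\al(\phi(|x_{0}-z|)+ct_{0})}$, and letting $\ep\to0$ leaves $u(x_{0},t_{0})\le M+B(t_{0})\le\sup h+K(1-\s)\chi_{0}$ with $K=K(\al,\Ls,\s,T)$. At $\s=1$ the Young residual and $B$ vanish, recovering $u\le\sup h$ and matching (c) as $\s\downarrow1$.

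The genuine obstacle throughout is the construction and \emph{global} verification of these barriers: the naive profiles $e^{a|x-z|^{2}}$, $e^{\al|x-z|}$, $|x-z|^{\st}$ either fail to be $C^{2}$ at the center or make $f_{r}/r$ blow up there, so one regularizes (the $\phi$, the ``$+1$''), and this in turn fixes how $c$ and the admissible range of $\ep$ depend on $\Ls,\s,T$. The rest is routine: the growth hypotheses $e^{o(R^{2})}$, $e^{o(R)}$, $o(R^{\st})$ are exactly at the level where barriers of profile $e^{aR^{2}}$, $e^{\al R}$, $R^{\st}$ still dominate $u$ on $\{r=R\}$ yet collapse to $M$ (plus the unavoidable $\chi$-residuals) at the fixed interior point once the free parameter goes to zero, and the comparison step is the bounded-cylinder version of the comparison principle already available from Conditions A, B and \cite{BM1,BM2,BM5}.
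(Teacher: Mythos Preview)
Your argument is correct and follows the same architecture as the paper: build an explicit radial super-solution on each finite cylinder $B^{R}_{T}$, invoke the comparison principle (the paper's Lemma \ref{sec3.18}), send $R\to\infty$, then shrink the free parameter. The key device in part (b)---Young's inequality $\tau^{\s}\le \s\tau+(1-\s)$ to split $\chi(t)|Dw|^{\s}$ into an absorbable part and a bounded $(1-\s)$-residual---is exactly the paper's step (see (\ref{sec4.141})--(\ref{sec4.202})).

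The differences are in the specific barriers. The paper uses the additive template $w=at+b(1+t)v(r)$ throughout Part II of Section 4, with $v(r)=e^{cr^{2}}$ for $\s=0$, $v(r)=e^{cr}-1-cr$ for $0<\s\le 1$, $v(r)=r^{\st}$ for $1<\s\le 2$, and the integral profile $v(r)=\int_{0}^{r^{2}}(1+\tau^{p})^{-1}\,d\tau$ of Lemma \ref{sec3.10} for $\s>2$; regularity at $r=0$ is handled by choosing $v$ to vanish to sufficiently high order, and the $\s>1$ range is split in two. Your barriers use a multiplicative time factor and a smooth radial regularization ($e^{a(r^{2}+2\Ls t)}$, $e^{\al(\sqrt{r^{2}+1}+ct)}$, $(r^{2}+1)^{\st/2}e^{ct}$), which keeps everything $C^{2}$ at the center without the integral construction and treats all of $\s>1$ at once. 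That unification is a genuine simplification over the paper's split into Sub-Parts (iii) and (iv), at the cost of letting the admissible $\ep$ depend on $T$ through $e^{c(\s-1)T}$; the paper's additive form isolates the limit $\lim_{b\to 0}a$ more transparently (cf.\ (\ref{sec4.64})). One small point to tidy: in (b) your absorption step requires the rate $\al$ to satisfy a smallness condition tied to $\sup|\chi|$ and $\s$, so your constant $K$ depends on $\sup|\chi|$ through that choice---this is allowed by the statement, but should be said explicitly.
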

\vsp
We now present a minimum principle. Note that the condition $\Ln>-\infty$ is needed only for parts (a) and (b) of the theorem. Part (c) of the theorem holds without this restriction.
\vsp
\begin{thm}\label{sec2.10}{(Minimum Principle)} Let  $0<T<\infty$, $h:\IR^n\rightarrow \IR$ be continuous, with $\sup_{\IR^n} h(x)<\infty$, and 
$Z:(-\infty, \infty)\rightarrow [0,\infty)$ be non-increasing and continuous. We assume that $\Ls<\infty$.

Let $u\in usc(\IR^n_T)$ solve
$$H(D^2u+Z(u)Du\otimes Du)-u_t\le 0\;\mbox{in $\IR^n_T,$ and $u(x)\ge h(x),\;\forall x\in \IR^n$}.$$

Assume for parts (a) and (b) that $\Ln>-\infty$. Let $\st$ be as in (\ref{sec2.52}). Then the following hold.
\vsp
(a) Suppose that $\s=0$. If $\sup_{B^R_T}(-u(x,t))=  e^{o(R^2)},$ as $R\rightarrow \infty$, then
$$u(x,t)\ge \inf_{\IR^n}h(x)-t\left( \sup_{(0,T)}\chi(t) \right) ,\;\;\forall (x,t)\in \IR^n_T.$$
\vsp
(b) Let $0<\s\le 1$. If $\sup_{B^R_T}(-u(x,t))= e^{o(R)},$ as $R\rightarrow \infty$ then
$$u(x,t)\ge \inf_{\IR^n}h(x).$$
\vsp
(c) Let $1<\s<\infty$ and assume that $\sup_{B_T^R}(-u(x,t))= o(R^{\st}),$ as $R\rightarrow \infty$. Then
$$u(x,t)\ge \inf_{\IR^n}h(x). \qquad \Box$$
\end{thm}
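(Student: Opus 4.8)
The plan is to prove Theorem \ref{sec2.10} by adapting the sub-solution/super-solution machinery used for Theorem \ref{sec2.7}, but now in the exponential-growth regime appropriate to $k=1$ (i.e. $\g=2$), and by playing off the two terms $H(D^2u + Z(u)Du\otimes Du)$ and $\chi(t)|Du|^\s$ against each other as the power $\s$ varies. Since $k=1$ we have, by Remark \ref{sec2.40}(iv), $H(q,X)=H(X)$, so the gradient enters the equation only through the $\chi(t)|Du|^\s$ term; this is exactly what forces the different growth thresholds $e^{o(R^2)}$, $e^{o(R)}$, and $o(R^{\st})$ in parts (a), (b), (c). The overall strategy in each case is the standard one: fix $(x_0,t_0)\in\IR^n_T$, and for small $\ve>0$ build a smooth function $w_\ve$ on a large cylinder $B^R_T$ (with $R$ eventually sent to $\infty$) so that $w_\ve$ is a classical super-solution of the inequality satisfied by $u$, $w_\ve \ge u$ on the parabolic boundary of $B^R_T$ (using $u(x,0)\ge h(x)$ at the bottom and the growth hypothesis plus a large additive constant on the lateral boundary $|x-z|=R$), and $w_\ve(x_0,t_0)$ is close to the asserted lower bound. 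Then the comparison principle gives $u(x_0,t_0)\le w_\ve(x_0,t_0)$, and letting $R\to\infty$ and then $\ve\to0$ yields the bound. (For a minimum principle one argues with $-u$, so "super-solution" and "sub-solution" are interchanged accordingly; I will build barriers $\psi$ from below.)

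The concrete ingredients I would use are the barrier functions constructed in Sections 4--6 (invoked here as already proven). For part (a), $\s=0$, the equation is $H(D^2u+Z(u)Du\otimes Du)-u_t\le 0$ with only the constant forcing $\chi(t)$; here the natural barrier is Gaussian-type, $\psi(x,t) = \inf_{\IR^n}h - t\sup_{(0,T)}\chi - \ve\,\exp\!\big(\mu r^2/(T-t)\big)$ or a suitable variant, whose $r^2$ exponent matches the $e^{o(R^2)}$ hypothesis; one checks using $\Ln>-\infty$ (to control $H$ applied to the Hessian of the barrier from below, together with $Z\ge0$ and ellipticity via Condition A) that $\psi$ is a sub-solution. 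For part (b), $0<\s\le1$, the $|Du|^\s$ term is sublinear, so a barrier with an $e^{\mu r}$-type growth (exponent matching $e^{o(R)}$) works: the linear-in-$r$ exponential has bounded-in-$r$ logarithmic derivative, so $|D\psi|^\s$ is comparable to $|D\psi|$ up to constants and can be absorbed; again $\Ln>-\infty$ is used for the Hessian term. For part (c), $\s>1$, the key point is that the superlinear gradient term $\chi(t)|Du|^\s$ can be made to dominate and have the favorable sign (here, for the minimum principle one uses that it helps push $u$ up) — this is precisely why no lower bound on $\Ln$ is needed — and the relevant barrier grows only polynomially like $r^{\st}$, matching $o(R^{\st})$; the exponent $\st=\s/(\s-1)$ arises from balancing $r^{\st}$ against $(r^{\st-1})^\s = r^{\st}$ in the gradient term. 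These barrier estimates are exactly the computations carried out in Sections 3--6, so here I would only cite them and verify the boundary inequalities and the limit.

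After the barrier is in place the argument is routine: on $\p_p B^R_T$ we have $u\le h^{\sup}$-type bound on the bottom and, by the growth hypothesis, $\sup_{B^R_T}(-u)=o(R^\bt)$ with $\bt\in\{2,1,\st\}$ chosen to be strictly less than the growth rate of the barrier in $r$, so for $R$ large the barrier (shifted by the fixed additive constants coming from $h$ and $\chi$, plus the small $\ve$-term) lies below $u$ on the lateral boundary. The comparison principle for \eqref{sec1.2}-type equations (valid by \cite{BM1,BM2,BM5}, as recalled in the introduction) then gives $u\ge\psi$ throughout $B^R_T$; evaluating at a fixed point, sending $R\to\infty$ kills the lateral-barrier contribution, and sending $\ve\to0$ removes the last term, leaving the stated inequality. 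The main obstacle, and the only genuinely delicate point, is the barrier verification in part (a): with $\s=0$ there is no gradient term to help, so one must show the pure operator $H(D^2\psi + Z(\psi)D\psi\otimes D\psi)$ applied to a Gaussian barrier stays controlled from below, which requires both $\Ln>-\infty$ and care that $Z\ge0$ only adds a non-negative rank-one term in the "right" direction (handled by Condition A monotonicity); matching the allowed $e^{o(R^2)}$ growth against a barrier that is only $e^{O(r^2)}$ is tight and needs the $(T-t)$ blow-up factor (or an iteration over short time intervals of length $\sim 1/\mu$) to make the sub-solution inequality hold on all of $(0,T)$. Everything else follows the template of Theorem \ref{sec2.7} and its $k=1$ analogue Theorem \ref{sec2.9}.
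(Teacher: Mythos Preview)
Your overall strategy---build a sub-solution barrier $w$ from below, check $w\le u$ on the parabolic boundary of $B^R_T$, apply the comparison principle (Lemma \ref{sec3.18}), then send $R\to\infty$ and the small parameter to zero---is exactly what the paper does, and you correctly cite the barrier constructions of Sections 4--6 (here Part II of Section 5) as the concrete input. Your specific suggestion of a $(T-t)$-blowup Gaussian is a classical Tychonoff variant and differs from the paper's choice $w=-at-b(1+t)e^{cr^2}$ with \emph{fixed} $c$ and small $b$; the paper's form is simpler here because the ``$b\to 0$'' limit cleanly isolates $a=\al$, whereas your barrier fails the sub-solution inequality near $r=0$ when $\chi(t)+\al=0$ (the Hessian term $H(-\ve\tfrac{2\mu_0}{T-t}I)<0$ is not offset by anything). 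Since you do say ``or a suitable variant'' and defer to the paper's computations, this is not fatal, but be aware that the specific form you wrote does not work as-is.

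There is, however, a genuine conceptual error in your explanation of part (c). You write that for $\s>1$ ``the superlinear gradient term $\chi(t)|Du|^\s$ can be made to dominate and have the favorable sign \ldots\ this is precisely why no lower bound on $\Ln$ is needed.'' This is wrong: the sign of $\chi$ is not assumed, so $\chi(t)|Dw|^\s$ cannot be arranged to help, and in fact the paper estimates it by $\al|Dw|^\s$ and absorbs it as an \emph{unfavorable} term. The actual reason no bound on $\Ln$ is needed is structural and has nothing to do with the gradient term: for the polynomial barriers $v(r)=r^{\st}$ (or the smoothed integral version for $\s>2$), the quantity $1-rv''(r)/v'(r)$ that enters $H\bigl(e,(1-rv''/v'+\cdots)e\otimes e-I\bigr)$ in (\ref{sec4.14}) is bounded from below (equal to $2-\st$, or $\ge 2-\st$), so $H$ is only ever evaluated at matrices $\lambda e\otimes e-I$ with $\lambda$ in a fixed half-line. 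Continuity of $H$ then gives a finite lower bound $N$ automatically, without any hypothesis on $\Ln$. By contrast, for the exponential barriers in parts (a) and (b) one has $1-rv''/v'\to -\infty$ as $r\to\infty$, and \emph{that} is why the assumption $\Ln>-\infty$ is needed there. This distinction is made explicitly in Section 5, Part II, and you should internalize it rather than attribute the phenomenon to the $|Du|^\s$ term.
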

\vsp
Finally, we present similar results for a class of doubly nonlinear equations of the type
$$H(Du, D^2u)-f(u)u_t=0,\;\;\mbox{in $\IR^n_T,$\; with $u(x,0)=g(x),\;\forall x\in \IR^n$.}  $$
If $k=1$, we assume that $f\equiv 1$ and the differential equation then reads
\eqRef{sec2.70}
H(D^2u)-u_t=0,\;\;\mbox{in $\IR^n_T$ with $u(x,0)=g(x),\;\forall x\in \IR^n$.}
\ee 
The above is not doubly nonlinear but is contained in our work. 

It is to be noted that the afore stated theorems are used to obtain a maximum principle for these equations. The minimum principle, however, requires a different treatment.

If $k>1$ we take $f: [0,\infty)\rightarrow[0,\infty)$ to be an increasing $C^1$ function such that
$f^{1/(k-1)}$ is concave and consider equations of the type
\eqRef{sec2.71}
H(Du,D^2u)-f(u) u_t=0,\;\;\mbox{in $\IR^n_T$ with $u(x,0)=g(x),\;\forall x\in \IR^n$,}
\ee 
where $u>0$. 

For $k>1$, let $F$ be a primitive of $f^{-1/(k-1)}$. Since $f(s)>f(0)\ge  0,\;\forall s>0$, we consider the following two situations: 
\eqRef{sec2.71.0}
\mbox{(i)}\;\; \lim_{\ve \rightarrow 0^+} F(1)-F(\ve) <\infty, \quad\mbox{and}\quad \mbox{(ii)}\;\;\lim_{\ve \rightarrow 0^+} F(1)-F(\ve) =\infty.
\ee
\vsp
We set $\chi(t)\equiv0$ in Theorems \ref{sec2.6} and \ref{sec2.10}. 
\vsp

\begin{thm}\label{sec2.8} 
Let $f: [0,\infty)\rightarrow [0,\infty)$ be a $C^1$ increasing function and $g:\IR^n\rightarrow (0,\infty)$, continuous, be such that $0<\inf_x g(x)\le \sup_x g(x)<\infty.$ Assume that
$\Ls<\infty$.

\NI (a) Maximum Principle: Let $k>1$ and $f^{1/(k-1)}$ be a concave function. Suppose that
$\phi:\IR\rightarrow [0,\infty)$ is a $C^2$ increasing function such that 
$\phi^{\prime}(\tau)= f(\phi(\tau))^{1/(k-1)}.$ Recall $\gs$ from (\ref{sec2.52}).

If $u\in usc( \overline{\IR^n_T}),\;u>0,$ solves
$$H(Du, D^2u)-f(u) u_t\ge 0,\;\mbox{in $\IR^n_T$ and $u(x,0)\le g(x),\;\forall x\in \IR^n$,}$$
and $\sup_{B^R_T} u(x,t)\le \phi( o(R^{\gs}))$, as $R\rightarrow \infty$, then
$$\sup_{ \IR^n_T}u(x,t)\le \sup_{\IR^n} g(x).$$

Let $k=1$ and $f\equiv 1$, i.e, $H(D^2u)-u_t\ge 0$. If $\sup_{B_T^R} u(x,t)\le e^{o(R^2)}$, as $R\rightarrow \infty$, then
$\sup_{ \IR^n_T}u(x,t)\le \sup_{\IR^n}g(x).$
\vsp
\NI (b) Minimum Principle: Let $k>1$, $f$ and $\phi$ be as in part (a). 

Suppose that $u\in lsc( \overline{\IR^n_T}),\;u>0,$ solves
$$H(Du, D^2u)-f(u) u_t\le 0,\;\mbox{in $\IR^n_T$ and $u(x,0)\ge g(x),\;\forall x\in \IR^n$.}$$

If condition (\ref{sec2.71.0})(i) holds, i.e, $\lim_{\ve\rightarrow 0^+}F(1)-F(\ve)<\infty$ then
$$u(x,t)\ge \inf_{\IR^n} g(x),\;\forall(x,t)\in \IR^n_T.$$

If condition (\ref{sec2.71.0})(ii) holds, i.e., $\lim_{\ve\rightarrow 0^+} F(1)-F(\ve)=\infty$, and $\inf_{B_T^R}u(x,t)\ge \phi(-o(R^{\gs}))$ as $R\rightarrow \infty$ then
$$u(x,t)\ge \inf_{\IR^n} g(x),\;\forall(x,t)\in \IR^n_T.$$
\vsp
Suppose $k=1$ and $f\equiv 1$, i.e., $H(D^2u)-u_t\le 0$. If $\inf_{B^R_T} u(x,t)\ge -e^{o(R^2)}$, as $R\rightarrow \infty$, then
$$u(x,t)\ge \inf_{\IR^n}g(x),\;\;\forall(x,t)\in \IR^n_T.$$
\end{thm}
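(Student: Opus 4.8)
The strategy is to reduce everything to the already-established principles (Theorems \ref{sec2.6}, \ref{sec2.7}, \ref{sec2.9}, \ref{sec2.10}) by means of the change of variable $u=\phi(v)$, exactly as advertised in the Introduction. First consider the maximum principle of part (a) for $k>1$. Given the solution $u>0$ of $H(Du,D^2u)-f(u)u_t\ge 0$, set $v=\phi^{-1}(u)$, which is well defined and $usc$ since $\phi$ is a $C^2$ increasing function with $\phi'(\tau)=f(\phi(\tau))^{1/(k-1)}>0$. A direct computation with the homogeneity Condition B (using $k_1=k-1$, so $H(\theta q,X)=|\theta|^{k-1}H(q,X)$) shows that $v$ is a viscosity sub-solution of $H(Dv,D^2v+Z(v)Dv\otimes Dv)-v_t\ge 0$, where $Z$ comes from the concavity of $f^{1/(k-1)}$: one gets $Z(v)=-\phi''(\tau)/\phi'(\tau)\cdot(\text{appropriate power})\ge 0$ and non-increasing precisely because $f^{1/(k-1)}$ is concave. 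The initial condition $u(x,0)\le g(x)$ becomes $v(x,0)\le \phi^{-1}(g(x))=:h(x)$, which is continuous and bounded above since $g$ is bounded and bounded away from $0$ and $\phi^{-1}$ is continuous. The growth hypothesis $\sup_{B^R_T}u\le\phi(o(R^{\gs}))$ translates, upon applying the increasing function $\phi^{-1}$, into $\sup_{B^R_T}v=o(R^{\gs})$, which is exactly the hypothesis $\bt=\gs$ in Theorem \ref{sec2.6}(a) with $\s=0$. Theorem \ref{sec2.6}(a) then yields $\sup v\le\sup h=\sup\phi^{-1}(g)=\phi^{-1}(\sup g)$, and applying $\phi$ gives $\sup u\le\sup g$. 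The case $k=1$, $f\equiv 1$ is the same with $\phi$ the identity: $H(D^2u)-u_t\ge 0$ with $\sup_{B^R_T}u=e^{o(R^2)}$ is precisely the hypothesis of Theorem \ref{sec2.9}(a) (with $\s=0$, $\chi\equiv 0$), giving $\sup u\le\sup g$.

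For the minimum principle in part (b) the difficulty, flagged in the Introduction, is that the change of variable $u=\phi(v)$ is delicate near $u=0$: $\phi$ maps $\IR$ onto $[0,\infty)$, and $v=\phi^{-1}(u)$ may run to $-\infty$ as $u\to 0^+$. The behavior is governed by the primitive $F$ of $f^{-1/(k-1)}$ through the dichotomy (\ref{sec2.71.0}). The plan: if (\ref{sec2.71.0})(i) holds, then $\phi^{-1}(u)$ stays bounded below even as $u\to 0$ — more precisely $F$ extends to a finite value at $0$, so $v$ is bounded below on any strip where $u$ is bounded, and in fact $\inf_x\phi^{-1}(g(x))>-\infty$; one can then argue that $v$ solves the transformed super-solution inequality and, because $v$ is bounded below, apply Theorem \ref{sec2.7} (the minimum principle for $k>1$) with no extra growth condition needed beyond what is automatic, yielding $\inf v\ge\inf h$ and hence $\inf u\ge\inf g$. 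If instead (\ref{sec2.71.0})(ii) holds, $\phi^{-1}$ is unbounded below near $0$ and one must impose the growth control $\inf_{B^R_T}u\ge\phi(-o(R^{\gs}))$; applying $\phi^{-1}$ this becomes $\sup_{B^R_T}(-v)=o(R^{\gs})$, which is the hypothesis of Theorem \ref{sec2.7}(a) with $\s=0$, giving $\inf v\ge\inf h$ and therefore $\inf u\ge\inf g$. The $k=1$, $f\equiv 1$ case again reduces directly to Theorem \ref{sec2.10}(a) (or the $\s=0$ part), with $\inf_{B^R_T}u\ge -e^{o(R^2)}$ the requisite growth bound.

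The bookkeeping issue that needs care throughout is the domain of $Z$: the transformed equation only makes sense where $Z$ is defined, i.e. on $[m,\infty)$ for the maximum-principle statements where $\inf u>0$ forces $\inf v$ above some $m$, and on all of $\IR$ for the minimum-principle statements — this is why Theorems \ref{sec2.7} and \ref{sec2.10} are stated with $Z:(-\infty,\infty)\to[0,\infty)$. One must verify that the concavity of $f^{1/(k-1)}$ indeed produces a non-increasing $Z\ge 0$ on the relevant half-line (this is a one-variable calculus identity: writing $\psi=f^{1/(k-1)}$, one finds $Z$ proportional to $-\psi'\circ\phi^{-1}$, and $\psi$ concave with $\psi\ge 0$ makes $\psi'$ non-increasing, hence $Z$ non-increasing after composing with the increasing map $\phi^{-1}$ and accounting for signs).

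The main obstacle, as I see it, is not any single analytic estimate — those are all outsourced to the earlier theorems — but rather the careful justification, in the viscosity sense, that $u$ solving the doubly nonlinear inequality forces $v=\phi^{-1}(u)$ to solve the transformed inequality, together with the matching of the translated growth rates and the verification that the translated initial/growth data land exactly in the hypotheses of Theorems \ref{sec2.6}, \ref{sec2.7}, \ref{sec2.9}, \ref{sec2.10}. The $\phi$-composition at the level of test functions is standard (it is the content of \cite{BM5,BM6}) but must be invoked cleanly, and the $F(1)-F(\ve)$ dichotomy must be shown to be precisely the dividing line between "no growth condition needed" and "growth condition needed" in the minimum principle, which is the one genuinely new point in this theorem.
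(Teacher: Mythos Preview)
Your outline for part (a) and for the divergent case (\ref{sec2.71.0})(ii) of part (b) is essentially what the paper does: change variables via $u=\phi(v)$, land in the setting of Theorems \ref{sec2.6}/\ref{sec2.7} (or \ref{sec2.9}/\ref{sec2.10} when $k=1$) with $\s=0$, $\chi\equiv 0$, and read off the conclusion. One small correction: the sign of $Z$ is $Z(v)=\phi''(v)/\phi'(v)$, not $-\phi''/\phi'$; since $\phi$ is convex (Remark \ref{sec3.19}(a)) this is non-negative, and it is non-increasing because it equals $(f^{1/(k-1)})'\circ\phi$.

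There is, however, a genuine gap in your treatment of the convergent case (\ref{sec2.71.0})(i) of part (b). You propose to invoke Theorem \ref{sec2.7}, arguing that since $v=\phi^{-1}(u)$ is bounded below the growth hypothesis is automatic. The obstruction is not the growth hypothesis but the \emph{domain of $Z$}: Theorem \ref{sec2.7} is stated for $Z:(-\infty,\infty)\to[0,\infty)$ precisely because its proof uses the sub-solutions of Section 5, Part I, which are of the form $w=-at-b(1+t)r^{\gs}$ and hence negative. In the convergent case, $\phi:[0,\infty)\to[0,\infty)$ and $Z$ is only defined on $(0,\infty)$ (Remark \ref{sec3.19}(b)); in general $Z(0^+)=(f^{1/(k-1)})'(0^+)$ can be $+\infty$ (e.g.\ $f(s)=s^\al$ with $0<\al<k-1$), so $Z$ cannot be extended continuously and non-increasingly to all of $\IR$. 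Thus Theorem \ref{sec2.7} is simply not applicable here.

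The paper handles this case differently: it builds, in Section 6 Case (i-1), a \emph{positive} sub-solution
\[
w(x,t)=\frac{\hat\mu\left[1-(r/R)^{(k+1)/k}\right]^{k/(k-1)}}{\bigl(1+t/E\bigr)^{1/(k-1)}},\qquad E\sim R^{k+1},
\]
on each cylinder $B_R(z)\times(0,T)$, which vanishes at $r=R$. Since $v>0$ everywhere, comparison on the parabolic boundary is automatic with no growth condition whatsoever, and $Z$ is only ever evaluated at positive arguments. Sending $R\to\infty$ then gives $v(z,t)\ge\hat\mu=\phi^{-1}(\inf g)$. This positive barrier is the missing ingredient in your plan; it is what makes case (i) work without a growth assumption and without $Z$ on the negative axis.
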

\vspp

\section{Preliminaries}
\vsp
In this section, we present some definitions, lemmas and remarks we will use to prove the main results. Fix $z\in \IR^n$ and set $r=|x-z|,\;\forall x\in \IR^n$. A unit vector in $\IR^n$ is denoted by $e=(e_1,e_2,\cdots,e_n)$.
\vsp
We begin with an elementary remark that will be used frequently in our work.
 
\begin{rem}\label{sec3.2} Assume that $w:\IR^n\times [0,\infty)\rightarrow \IR$ is a $C^{1}$ function in $x$ and $t$ and $C^2$ in $x$ except, perhaps, at $x\ne z $. We get, for $r\ne 0$,
\bea\label{sec3.3}
&&H(Dw, D^2w+Z(w) Dw\otimes Dw)\nonumber\\
&&\quad\qquad\qquad\qquad\qquad\qquad=H\left( w_r e, \; \left( \frac{w_r}{r} \right) I +\left(w_{rr} -\frac{w_r}{r}+ (w_r)^2 Z(w) \right)e\otimes e \; \right),
\eea
where $e=(e_1,e_2,\cdots, e_n)$ with $e_i=(x-z)_i/r,\;\forall i=1,2,\cdots, n.$ Let $\kappa:(0,T)\rightarrow [0,\infty)$ be a $C^1$ function.

{\bf Case (a) ($w_r\ge 0$):} We apply Condition B, in (\ref{sec2.2}), to (\ref{sec3.3}). Factor $w_r$ from the first entry, $w_r/r$ from the second and use $k=k_1+1$ to get
\bea\label{sec3.4}
&&H(Dw, D^2w+Z(w) Dw\otimes Dw)    \nonumber \\
&&\qquad\qquad\qquad\qquad= \left( \frac{ w_r^k}{ r }\right) H\left( e, I +\left( \frac{r w_{rr}}{w_r}-1 + r w_r Z(w)\right)e\otimes e\right), \;\forall r>0.
\eea
\vsp
If $w(x,t)=\kappa(t) v(r)$, with $v^\prime(r)\ge 0$, then (\ref{sec3.4}) implies that, in $r>0$, 
\bea\label{sec3.6}
&&H(Dw, D^2w+Z(w) Dw\otimes Dw) \nonumber\\
&&\qquad\qquad=\frac{ (\kappa(t) v^\prime(r))^k}{r} H\left( e, \;I +\left( \frac{r v^{\prime\prime}(r)}{v^\prime(r)}-1 + r\kappa(t) v^{\prime}(r) Z(w) \; \right)e\otimes e\;\right).
\eea
\vsp
{\bf Case (b) ($w_r\le 0$):} Clearly, (\ref{sec3.3}) leads to
\bea\label{sec3.7}
&&H(Dw, D^2w+Z(w) Dw\otimes Dw) \nonumber\\
&&\qquad\qquad\qquad\qquad=\frac{ |w_r|^k}{ r } H\left( e, \left( 1-\frac{r w_{rr}}{w_r} + r |w_r| Z(w)\right)e\otimes e-I\right),\;\;\forall r>0.
\eea
If $w(x,t)=\kappa(t)v(r)$ and $v^{\prime}(r)\le 0$ then (\ref{sec3.7}) leads to the following analogue of (\ref{sec3.6}):
\bea\label{sec3.8}
&&H(Dw, D^2w+Z(w) Dw\otimes Dw) \nonumber\\
&&\quad\qquad=\frac{ (\kappa(t)|v^\prime(r)|)^k}{ r } H\left( e, \;\left(\; r|v^{\prime}(r)|\kappa(t)Z(w)\;+1   - \frac{rv^{\prime\prime}(r)}{v^\prime(r)}\right)e\otimes e -I \right).
\quad \Box
\eea
\end{rem}
\vsp
The following lemma was proven in \cite{BM6}. 

\begin{lem}\label{sec3.10} Let $\bt,\;\bb$ be such that $1<\bb<\bt$ and $R>0$. Fix $z\in \IR^n$, set $r=|x-z|$ and
define
$$v(r)=\int_0^{ r^\bt } \frac{1}{1+\tau^p} \;d\tau,\quad \mbox{where}\;\;p=\frac{\bt-\bb}{\bt}. $$
\vsp
Then  (i)\;$0<p<1$,\quad  (ii)\;$(1-p)\bt=\bb$, and
\ben
\mbox{ (iii)\;\;$\forall r\ge 0$},\quad \frac{ r^{\bt}  }{ 1+r^{\bt p}  }\le v(r)\le \min\left( r^{ \bt},\; \frac{ r^{\bb} }{1-p}   \right). 
\een

If $R>1$ then 
\ben
\mbox{(iv)}\;\;\frac{\bt}{2\bb}=\frac{1 }{2(1-p)}\le \frac{ v(r)-v(R) }{r^{\bb}-R^{\bb}} \le \frac{1 }{1-p}=\frac{\bt}{\bb}, \quad \forall r\ge R.
\een
Moreover, $v^{\prime}(r)=\bt r^{\bt-1}/(1+ r^{p\bt})$ implying that
\ben
&& \mbox{(v)}\;\;v^{\prime}(r)\le\bt \min\left( r^{ \bb-1},\;r^{\bt-1}\right),\;\mbox{(vi)}\;\;\frac{ (v^{\prime}(r))^k }{ r}\le \bt^k\min\left( r^{k\bt-\g},\;r^{k\bb -\g} \right),\\
&&\mbox{and (vii)}\;\;v^{\prime\prime}(r) =\bt r^{\bt-2} 
\left( \frac{ (\bt-1)+(\bb-1) r^{p\bt} }{  ( 1+r^{p\bt})^2 } \right).\
\een
\end{lem}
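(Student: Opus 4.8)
The plan is to verify all seven assertions by direct computation, with everything resting on the single algebraic identity $\bt-1-\bt p=\bb-1$, i.e. $(1-p)\bt=\bb$; this identity is assertion (ii) and is immediate from $p=(\bt-\bb)/\bt$, which also gives (i) since $1<\bb<\bt$ forces $0<\bb/\bt<1$ and hence $0<p<1$.

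For (iii) I would estimate the integrand $(1+\tau^p)^{-1}$ on the interval of integration $[0,r^\bt]$: monotonicity of $\tau\mapsto\tau^p$ gives $(1+\tau^p)^{-1}\ge(1+r^{\bt p})^{-1}$ there, and hence the stated lower bound at once; the trivial bound $(1+\tau^p)^{-1}\le1$ gives $v(r)\le r^\bt$, while $(1+\tau^p)^{-1}\le\tau^{-p}$ together with the convergent integral $\int_0^{r^\bt}\tau^{-p}\,d\tau=r^{\bt(1-p)}/(1-p)=r^\bb/(1-p)$ (using (ii) and $p<1$) gives the other branch of the minimum. The formula for $v'$ is the fundamental theorem of calculus plus the chain rule applied to $\tau=s^\bt$; (v) then follows by discarding, in turn, the $1$ and the $r^{\bt p}$ in the denominator of $v'(r)=\bt r^{\bt-1}/(1+r^{\bt p})$ and invoking the identity ($\bt-1-\bt p=\bb-1$); (vi) is raising (v) to the $k$-th power, dividing by $r$, and using $\g=k+1$ so that $k(\bb-1)-1=k\bb-\g$; and (vii) is a one-line quotient-rule differentiation of $v'$ followed by regrouping the numerator $(\bt-1)(1+r^{\bt p})-\bt p\,r^{\bt p}$ via the same identity.

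The only step needing an actual idea is (iv). For $r\ge R>1$ I would write both $v(r)-v(R)=\int_R^r v'(s)\,ds=\int_R^r\bt s^{\bt-1}(1+s^{\bt p})^{-1}\,ds$ and $r^\bb-R^\bb=\int_R^r\bb s^{\bb-1}\,ds$ as integrals over the same interval, so that the ratio in (iv) is squeezed between the pointwise infimum and supremum over $s\in[R,r]$ of
$$\frac{\bt s^{\bt-1}}{\bb s^{\bb-1}(1+s^{\bt p})}=\frac{\bt}{\bb}\cdot\frac{s^{\bt p}}{1+s^{\bt p}},$$
where I used $\bt-\bb=\bt p$. This quotient is increasing in $s$ and, for $s>1$, lies strictly between $\bt/(2\bb)$ and $\bt/\bb$; since $\bt/\bb=1/(1-p)$ by (ii), that is exactly the claimed two-sided bound. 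I do not anticipate any real obstacle here — the whole lemma is elementary calculus — beyond being careful that $1-p>0$ wherever $1/(1-p)$ or $\int\tau^{-p}$ appears, which (i) already guarantees.
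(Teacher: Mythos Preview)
Your proposal is correct and follows essentially the same route as the paper: parts (i)--(iii), (v)--(vii) are handled identically, and for (iv) the paper bounds $(1+\tau^p)^{-1}$ between $(2\tau^p)^{-1}$ and $\tau^{-p}$ on $[R^\bt,r^\bt]$ and integrates, which after the change of variable $\tau=s^\bt$ is exactly your pointwise comparison of $v'(s)$ against $\bb s^{\bb-1}$ on $[R,r]$.
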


{\bf Comment:} Parts (iii) and (iv) of Lemma \ref{sec3.10} show that $v(r)$ grows like $r^\bt$ near $r=0$ and like $r^{\bb}$ for large values of $r$. Since $\bt\ge \bb$, one can design the function to decay fast enough at $r=0$ so as to be differentiable while its growth rate for large values of $r$ may be slower.  

\begin{proof} Parts (i)-(iii) follow quite readily. For part (iv), we take $R>1$ and write
\ben
v(r)=\int_0^{r^\bt} \left(1+\tau^p \right)^{-1} d\tau=v(R)+\int_{R^\bt}^{r^\bt} \left( 1+\tau^p \right)^{-1} d\tau
\een
We estimate $(2\tau^p)^{-1}\le (1+\tau^p)^{-1}\le \tau^{-p}$, for $\tau\ge 1$, and use this in the second integral to obtain part (iv). For part (v), note that
$1+r^{p\bt}\ge \min(1,\;r^{p\bt}).$ Using part (ii) yields the claim. Part (vi) follows by recalling that $\g=k+1=k_1+2$.

Next,
$$v^{\prime\prime}(r)=\bt\left[  \frac{ (\bt-1)r^{\bt-2} }{1+r^{p\bt} }- \frac{  p\bt\; r^{p \bt +\bt-2} } { \left( 1+r^{p\bt} \right)^2 } \right].$$
A simple calculation leads to part (vii).
\end{proof}
\vsp
The following remark is useful for the construction of the auxiliary functions. The values of $\bb$ and $\bt$, used in the remark, are motivated by the work in Sections 4 and 5.

\begin{rem}\label{sec3.11}  For Sub-Part (iv) of Part I in Section 4, we take $k>1$ (i.e, $\g>2$) and $\s>\g/2$. We set 
$$\bt=\gs=\g/ (\g-2)\quad\mbox{ and}\quad \bb=\st=\s/(\s-1).$$ 
Then $p=(\gs-\st)/\gs=(2\s-\g)/\g(\s-1)>0.$ Clearly, $0<p<1$.

We take
\ben
v(r)=\int_0^{ r^{\gs}} \frac{1}{1+\tau^p}\;d\tau,\quad\mbox{where}\; p=1-\frac{\st}{\gs}=\frac{2\s-\g}{\g(\s-1)}.
\een

From Lemma \ref{sec3.10}, (i) $0<p<1$, \;\;(ii) $(1-p)\gs=\st$,
\ben
&&\mbox{(iii) for $r\ge 0$,}\;\;\; \frac{ r^{\gs} }{ 1+r^{\gs p} }\le v(r)\le \min\left( r^{ \gs },\; \frac{\gs r^{\st} }{\st} \right), \\
&&\mbox{(iv) for any $R>1$,}\;\;\frac{\gs}{2\st}=\frac{1 }{2(1-p)}\le \frac{v(r)-v(R)}{r^{\st}-R^{\st}}\le \frac{1 }{1-p}=\frac{\gs}{\st}, \;\; \forall r\ge R.
\een
Moreover, $v^{\prime}(r)=\gs r^{\gs-1}/(1+ r^{p\gs}),$
\ben
&&\mbox{(v)}\;\;v^{\prime}(r)\le \gs \min\left(  r^{ \st-1},\;r^{\gs-1}\right),\;\;\mbox{(vi)}\;\frac{ (v^{\prime}(r))^k }{ r}\le {\gs}^k\min\left( r^{k\gs-\g},\;r^{k\st -\g} \right),\\\ &&\mbox{and (vii)}\;\;v^{\prime\prime}(r) =\gs r^{\gs-2} 
\left( \frac{ (\gs-1)+(\st-1) r^{p\gs} }{  ( 1+r^{\gs p})^2 } \right).\quad \Box
\een
\end{rem}
\vsp
\begin{rem}\label{sec3.201} The super-solutions and sub-solutions make use of functions that involve a $C^1$ function of $t$ and a $C^{1,\al}$ (for some $\al>0$) function of $v(r)$. See the functions discussed in Remark \ref{sec3.11}. The calculations done in the remark hold in the sense of viscosity at $r=0.$ The verification can be found in \cite{BM6}. 
\quad $\Box$
\end{rem}
We recall a comparison principle needed for our work, see \cite{CIL}. See also \cite{BM5} and \cite{BM6}.
\vsp
Let $F:\IR^+\times \IR\times \IR^n\times S^n\rightarrow\IR$ be continuous. Suppose that $F$ satisfies $\forall X,\;Y\in S^n$, with $X\le Y$, that
\bea\label{sec3.17}
\mbox{$F(t, r_1, q, X)\le F(t, r_2, q, Y),$ $\forall (t,q)\in \IR^+\times \IR^n$ and $r_1\ge r_2$.}
\eea
In this work, $F(t, r, q, X)=H(q, X+Z(r) q\otimes q)+\chi(t)|q|^\s,$ where $Z$ is a non-increasing continuous function, $\s\ge 0$ and $H$ satisfies Conditions A, B and C.
 \vsp
Let $\Om\subset \IR^n$ be a bounded domain, $\Om_T=\Om\times (0,T)$ and $P_T$ be the parabolic boundary of $\Om_T$.

\begin{lem}\label{sec3.18}{(Comparison principle)} Let $F$ satisfy (\ref{sec3.17}) and $\hat{f}:\IR^+\rightarrow \IR^+$ be a bounded continuous function. For some $m$, let 
$Z:[m ,\infty)\rightarrow \IR$ be a non-increasing continuous function.

Let $u\in usc(\Om_T\cup P_T)$ and $v\in lsc(\Om_T\cup P_T)$ such that 
$\inf(\inf u, \inf v)>m$. Suppose that $u$ and $v$ solve 
\ben
&& F(t,u, Du, D^2u+Z(u)Du\otimes Du)- \hat{f}(t)u_t\ge 0,\\
&&\mbox{and}\quad F(t,v, Dv, D^2v+Z(v)Dv\otimes Dv)-  \hat{f}(t)v_t\le 0,\quad \mbox{in $\Om_T$.}
\een
If $\sup_{P_T}v<\infty$ and $u\le v$ on $P_T$ then $u\le v$ in $\Om_T$. \quad $\Box$
\end{lem}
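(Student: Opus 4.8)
The plan is to obtain this comparison principle by reducing it to the standard parabolic comparison principle for viscosity solutions on a bounded cylinder (as in \cite{CIL}), after absorbing the extra structure into an equivalent well-behaved equation. First I would observe that, by Condition~A and since $Z\ge 0$, the map $(t,r,q,X)\mapsto F(t,r,q,X+Z(r)q\otimes q)-\hat f(t)\,r_t$ is degenerate elliptic and (weakly) monotone in the zero-order variable: indeed $r\mapsto Z(r)$ is non-increasing, so if $r_1\ge r_2$ then $Z(r_1)q\otimes q\le Z(r_2)q\otimes q$, and combined with \eqref{sec3.17} and Condition~A(i) one gets
\[
F\bigl(t,r_1,q,X+Z(r_1)q\otimes q\bigr)\le F\bigl(t,r_2,q,X+Z(r_2)q\otimes q\bigr),
\]
which is exactly the monotonicity hypothesis needed in the parabolic comparison theorem. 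The restriction $\inf(\inf u,\inf v)>m$ guarantees that $Z(u)$ and $Z(v)$ are always evaluated in the domain $[m,\infty)$ of $Z$, so the equation is genuinely defined along both sub- and super-solution.

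Next I would handle the time-derivative coefficient $\hat f(t)$. Since $\hat f$ is continuous and (I would assume, consistent with the applications, strictly) positive and bounded, dividing the equation by $\hat f(t)$ — or, more robustly, performing the time change $s=\int_0^t \hat f(\tau)^{-1}\,d\tau$, which is a $C^1$ increasing reparametrization of $(0,T)$ onto a finite interval — turns the inequalities into $\tilde F(s,u,Du,D^2u)-u_s\ge 0$ and $\le 0$ for the transformed functions, with $\tilde F$ still continuous, degenerate elliptic, and monotone in $r$ in the sense above. One should note that viscosity sub/super-solutions are preserved under such a smooth change of the time variable and under multiplication of the equation by a positive continuous factor; this is routine from the definition via test functions. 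After this reduction we are exactly in the setting of the classical comparison principle on $\Omega\times(0,S)$ with parabolic boundary data: $u\le v$ on the parabolic boundary and $\sup_{P_T}v<\infty$ (the latter ensuring the usual sup/inf-convolution doubling argument is not obstructed at the boundary), hence $u\le v$ throughout.

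The technical core — which I would cite from \cite{CIL} rather than reproduce — is the standard doubling-of-variables argument: one looks at $\sup_{\Omega_T\times\Omega_T}\bigl(u(x,t)-v(y,s)-\frac{|x-y|^2}{2\ve}-\frac{|t-s|^2}{2\ve}-\text{(penalization terms)}\bigr)$, uses the parabolic theorem on sums (Crandall--Ishii) to produce matrices $X\le Y$ and a common ``time-derivative'' value $a$ at near-maximizing points, plugs these into the two differential inequalities, and exploits the ellipticity $\tilde F(\cdot,\cdot,\cdot,X)\le \tilde F(\cdot,\cdot,\cdot,Y)$ together with the monotonicity in $r$ to reach a contradiction unless $\sup(u-v)\le 0$. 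The ellipticity in the $X$-slot is Condition~A(i); the only genuinely new ingredient here, beyond the literature, is checking that the substitution $X\mapsto X+Z(r)q\otimes q$ together with $Z$ non-increasing does not destroy the monotonicity in $r$ — which is the short computation displayed above.

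The main obstacle I anticipate is bookkeeping rather than conceptual: one must be careful that the quantity being perturbed, $Z(r)q\otimes q$, depends on $r$ (the solution value) and not on $t$ or the spatial position, so the $r$-monotonicity really does combine cleanly with \eqref{sec3.17}; and one must confirm that along the doubling argument the arguments of $Z$ stay bounded below by $m$, which follows since the penalized maximum points converge into $\Omega_T$ where $\inf u,\inf v>m$. A secondary point is the regularity/positivity of $\hat f$: if $\hat f$ is merely nonnegative and can vanish, the time change degenerates; I would either assume $\hat f>0$ (as is the case in the applications, where $\hat f$ arises from $f$ composed with a positive solution) or note that on the cylinder $\Omega_T$ one may work directly with the multiplicative form $\hat f(t)\,\partial_t$ in the theorem on sums, since $\hat f(t)$ is the same positive factor for both $u$ and $v$ at the matched time.
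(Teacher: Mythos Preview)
Your proposal is correct and aligns with the paper's treatment: the paper does not supply a proof of this lemma at all but simply recalls it from \cite{CIL} (see also \cite{BM5,BM6}), and your sketch --- checking that $r\mapsto F(t,r,q,X+Z(r)q\otimes q)$ inherits the required monotonicity from \eqref{sec3.17} together with $Z$ non-increasing, reducing the $\hat f(t)$ factor by a time change, and then invoking the standard parabolic doubling argument --- is exactly the route those references take. In short, you have filled in what the paper leaves as a citation, with no deviation in strategy.
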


Next, we discuss a change of variables that is used in the proof of Theorem \ref{sec2.8} for doubly nonlinear equations of the kind
\eqRef{sec3.190}
H(Du, D^2u)-f(u) u_t=0,\;\;\mbox{in $\IR^n_T$, $u>0$, with $u(x,0)=g(x),\;\forall x\in \IR^n$.   }
\ee

\begin{rem}\label{sec3.19} Let $f:[0,\infty)\rightarrow [0,\infty)$ be an increasing $C^1$ function. Suppose that $k>1$ and 
$f^{1/(k-1)}$ is concave. 

Let $I$ be either $[0,\infty)$ or $(-\infty, \infty)$, see (b) and (c) below. We select 
$\phi: I\rightarrow [0,\infty)$, an increasing $C^2$ function, such that 
$$\phi^{\prime}(\tau)= f(\phi(\tau))^{1/(k-1)},\;\;\forall \tau \in I,\;\;\mbox{or}\;\;\int ^{\phi(\tau)}_{\phi(\tau_0)} f^{-1/(k-1)}(\tht) \; d\tht=\tau-\tau_0.$$ 
We define the change of variable $u=\phi(v)$ by
\eqRef{sec2.71}
v(u)-v(u_0)=\phi^{-1}(u)=\int^u_{u_0} f^{-1/(k-1)}(\tht)\;d\tht,\;\;u\ge u_0,
\ee
for some $u_0\ge 0$.

We discuss some examples. Let $\al>0$, $a\ge 0$ and $f(s)=(s+a)^\al,\;\forall s\ge 0.$  Then $f(s)^{1/(k-1)}$ is concave if $\al\le k-1$. Set $c_k=(k-1-\al)/(k-1).$ We may take $u_0=0$ in (\ref{sec2.71}), we get that
$$u=\phi(v)=\left\{ \begin{array}{ccc} \left[  c_kv+a^{c_k})\right]^{1/c_k }-a ,& 0<\al<k-1,\;a\ge 0,\\ a e^v-a, & \al =k-1,\;a>0. \end{array}\right.$$
See also part (b) below.

If $a=0$, take $f(s)=s^{k-1}$ then $u=b e^v$ for any $b>0$. But, $u_0\ne 0$, see part (c).
\vsp
We make some observations about (\ref{sec2.71}).

{\bf (a)} It is clear that $v$ is an increasing concave function of $u$. The concavity follows since $f$ is non-decreasing. Since $v$ is increasing, $u$ is a convex function of $v$.

{\bf (b)} If the integral in (\ref{sec2.71}) is convergent for $u_0=0$ we then define 
$$v=\phi^{-1}(u)=\int^u_0 f^{-1/(k-1)}(\tht)\;d\tht.$$
Thus, $v(0)=0$ and $v>0$. 

We choose $I=[0,\infty)$ and $\phi:[0,\infty)\rightarrow [0,\infty)$.
This applies to examples like
$$f(s)=\left\{ \begin{array}{ccc} s^\al, & 0\le \al<k-1,\\ (s+a)^\al, & 0\le \al \le k-1,\end{array}\right.$$
where $a>0$.
\vsp
{\bf (c)} If the integral in (\ref{sec2.71}) is divergent for $u_0=0$ then $v(u_0)\rightarrow -\infty$ as $u_0\rightarrow 0^+$.
In this case, we select a primitive
$$v=\phi^{-1}(u)=\int^u f^{-1/(k-1)}(\tht)\;d\tht.$$
We choose $I=(-\infty, \infty)$ and $\phi:(-\infty, \infty)\rightarrow (0,\infty)$. 
This includes examples such as $f(s)=s^{k-1},\;(s+\log(s+1))^{k-1}$ etc.
\vsp
{\bf (d)} We show that in parts (b) and (c), $v\rightarrow \infty$ if $u\rightarrow \infty$. Set $\nu(s)=f^{1/(k-1)}(s)$. Since $\nu(s)$ is concave in $(0,\infty)$, it is clear that, 
for a fixed $\ve>0$, 
$$\nu(s)\le \nu(\ve)+(s-\ve) \nu^{\prime}(\ve),\;\;s\ge \ve.$$
Using (\ref{sec2.71}), we get that
$$v(u)=v(\ve)+ \int_\ve^u  \frac{1}{\nu(s)}ds\ge v(\ve)+ \int_\ve^u \frac{1}{ \nu(\ve)+(s-\ve) \nu^{\prime}(\ve) } ds.$$
The claim holds.
\vsp 
{\bf (e)} It is clear from (\ref{sec2.71}) that 
$$\frac{\phi^{\prime\prime}(v)}{\phi^{\prime}(v)}= \left.\left( \frac{ d}{ds} f^{1/(k-1)}(s) \right) \right|_{\phi(v)},$$
and $\phi^{\prime\prime}(v)/\phi^{\prime}(v)$ is non-increasing in $v$ since $f^{1/(k-1)}$ is concave and $\phi(v)$ is increasing in $v$.

Suppose that there are constants $0<\om_1\le \om_2<\infty$ such that
\begin{equation}\label{eq:add}\om_1\le \phi^{\prime\prime}(v)/ \phi^{\prime}(v) \le \om_2.\end{equation}
Integrating from $s=0$ to any $s>0$, we get that,
$$\om_1 s\le f^{1/(k-1)}(s)-f^{1/(k-1)}(0)\le \om_2 s,\;\;\forall s\ge 0.$$
Since $f(0)\ge 0$, we get that, for some $\om\ge 0$, $(\om_1s+\om)^{k-1}\le f(s)\le (\om_2 s+\om)^{k-1},\;\forall s\ge 0$. 

If $\om>0$ then we use $v$ as in part (b). If $\om=0$ then we use part (c). 
\vsp
{\bf (f)} The change of variable $u=\phi(v)$, as given by (\ref{sec2.71}), transforms (\ref{sec3.190}) into 
\ben
H(Dv, D^2v+Z(v) Dv\otimes Dv)-v_t=0\;\;\mbox{in $\IR^n_T$ with $v(x,0)=\phi^{-1}(g(x)),\;\forall x\in \IR^n$,}
\een
where $Z(v)=\phi^{\prime\prime}(v)/\phi^{\prime}(v)$, see Lemma 2.3 in \cite{BM5}. By part (e), $Z(v)$ is non-increasing in $v$ and the domain of $Z$ contains either $(0,\infty)$ or $(-\infty, \infty)$. $\Box$
\end{rem}

We now state a comparison principle for doubly nonlinear equations.

\begin{lem}\label{sec3.20} Let $T>0$ and $\Om\subset \IR^n$ be a bounded domain. Suppose that $k>1$ and $f:[0,\infty)\rightarrow [0,\infty)$ is a non-decreasing $C^1$ function such that $f^{1/(k-1)}$ is concave. Set $\Om_T=\Om\times (0,T)$ and $P_T$ to be the parabolic boundary of $\Om_T$.

Let $u \in usc(\Om_T)$, $v\in lsc(\Om_T)$ and $u>0$ and $v>0$. Suppose that 
\ben
H(Du, D^2u)-f(u)u_t\ge 0,\;\;\mbox{in $\Om_T$, and}\;\;H(Dv, D^2v)-f(v) v_t\le 0,\;\;\mbox{in $\Om_T$,}
\een
where $H$ satisfies conditions $A,\;B$ and $C$. 

If $u\le v$ on $P_T$ then $u\le v$ in $\Om_T$.
\end{lem}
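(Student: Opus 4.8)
The plan is to deduce the statement from the comparison principle of Lemma~\ref{sec3.18} by means of the change of variables $u=\phi(v)$ of Remark~\ref{sec3.19}. Since $k>1$ and $f^{1/(k-1)}$ is concave, I would first fix a primitive of $f^{-1/(k-1)}$ as prescribed there, according to the dichotomy (\ref{sec2.71.0}): if $\lim_{\ve\to 0^+}F(1)-F(\ve)<\infty$ take $\phi:[0,\infty)\rightarrow[0,\infty)$ with $\phi(0)=0$ (Remark~\ref{sec3.19}(b)), and if the limit is $+\infty$ take $\phi:(-\infty,\infty)\rightarrow(0,\infty)$ (Remark~\ref{sec3.19}(c)). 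In either case $\phi$ is an increasing convex $C^2$ function, and, setting $Z(v)=\phi^{\prime\prime}(v)/\phi^{\prime}(v)$, Remark~\ref{sec3.19}(e)--(f) gives that $Z\ge 0$ is non-increasing, with domain containing $(0,\infty)$, respectively all of $\IR$.

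Next I would pass to the transformed functions $\tilde u=\phi^{-1}(u)$ and $\tilde v=\phi^{-1}(v)$. Because $\phi^{-1}$ is $C^2$ and strictly increasing, the change of variables is valid in the viscosity sense (Lemma~2.3 of \cite{BM5}, as quoted in Remark~\ref{sec3.19}(f)): $\tilde u\in usc(\Om_T)$ is a subsolution and $\tilde v\in lsc(\Om_T)$ a supersolution of
\[
H(Dw,\,D^2w+Z(w)Dw\otimes Dw)-w_t=0\qquad\text{in }\Om_T .
\]
Since $\phi^{-1}$ is strictly increasing, $u\le v$ is equivalent to $\tilde u\le\tilde v$, both on $P_T$ and in $\Om_T$; so it suffices to prove $\tilde u\le\tilde v$ in $\Om_T$.

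Finally I would apply Lemma~\ref{sec3.18} with $F(t,r,q,X)=H(q,\,X+Z(r)q\otimes q)$ (taking $\chi\equiv 0$, so the exponent $\s$ is irrelevant) and $\hat f\equiv 1$. Its structural hypothesis (\ref{sec3.17}) holds because $Z$ is non-increasing and $H$ is monotone (Condition~A), and the remaining requirements — $\sup_{P_T}\tilde v<\infty$ and $\inf(\inf\tilde u,\inf\tilde v)>m$ with $[m,\infty)$ the domain of $Z$ — are met once one works with $u,v$ bounded on $\Om_T\cup P_T$ and uses $u,v>0$ (in case (c) the condition on the infimum is vacuous, $m=-\infty$). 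Lemma~\ref{sec3.18} then gives $\tilde u\le\tilde v$ in $\Om_T$, hence $u\le v$ in $\Om_T$.

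The monotonicity bookkeeping is routine; the one step needing genuine care is matching the two cases of (\ref{sec2.71.0}) to the domain of $Z$, i.e. choosing $\phi$ so that $\tilde u,\tilde v$ take values where $Z$ (equivalently $\phi$) is defined and the boundedness and infimum hypotheses of Lemma~\ref{sec3.18} hold. This is where the positivity assumption $u,v>0$ is used, and it is the main, if mild, obstacle.
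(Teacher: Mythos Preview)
Your proposal is correct and follows essentially the same approach as the paper: transform $u,v$ via $\phi^{-1}$ (choosing $\phi$ according to the dichotomy in (\ref{sec2.71.0})), obtain sub- and super-solutions of the equation with $Z(w)=\phi''(w)/\phi'(w)$, and apply Lemma~\ref{sec3.18}. The paper's proof splits into the same two cases and invokes the same ingredients (Remark~\ref{sec3.19} and Lemma~\ref{sec3.18}) in the same way.
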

\begin{proof} We employ Lemma \ref{sec3.18} and Remark \ref{sec3.19}. Let $u$ and $v$ be as in the statement of the theorem. 
Set
$$F(\hat{s},s)=\int_s^{\hat{s}} f^{-1/(k-1)}(\tht) d\tht,\;\;\forall \hat{s}\ge s\ge 0.$$
We define $F(\hat u,0)=\lim_{s\rightarrow 0^+}F(\hat u,s)$, if it exists.

{\bf (i)} Suppose that $F(1,0)<\infty$ then we define
$$\bar{u}=\phi^{-1}(u)=F(u, 0)\;\;\mbox{and}\;\;\bar{v}=\phi^{-1}(v)=F(v,0).$$
By parts (a) and (b) of Remark \ref{sec3.19}, $\bar{u}>0$ and $\bar{v}>0$. Also, by part (f) of Remark \ref{sec3.19},  
\ben
H(D\bar{u}, D^2\bar{u}+Z(\bar{u}) D\bar{u}\otimes D\bar{u})-\bar{u}_t\ge 0\;\;\mbox{and}\;\; H(D\bar{v}, D^2\bar{v}+Z(\bar{v}) D\bar{v}\otimes D\bar{v})-\bar{v}_t\le 0,
\een
in $\Om_T$, where $Z(s)=\phi^{\prime\prime}(s)/\phi^{\prime}(s)$ is non-increasing in $s$. Note that the domain of $Z$ contains $(0,\infty)$. Using Lemma \ref{sec3.18}, $\bar{u}\le \bar{v}$ in $\Om_T$ thus implying that $u\le v$ in $\Om_T$.
\vsp
{\bf (ii)} Suppose now that $F(1, 0)$ is divergent, see part (c) of Remark \ref{sec3.19}. Fix a primitive 
$$F(s)=\int^s f^{-1/(k-1)}(\tht) d\tht,\;\;s>0.$$
Define 
$\bar{u}=\phi^{-1}(u)=F(u)\;\;\mbox{and}\;\;\bar{v}=\phi^{-1}(v)=F(v).$
Then $-\infty<\bar{u},\;\bar{v}<\infty$ and by parts (e) and (f) of Remark \ref{sec3.19}, we get in $\Om_T$,
\ben
H(D\bar{u}, D^2\bar{u}+Z(\bar{u}) D\bar{u}\otimes D\bar{u})-\bar{u}_t\ge 0\;\;\mbox{and}\;\; H(D\bar{v}, D^2\bar{v}+Z(\bar{v}) D\bar{v}\otimes D\bar{v})-\bar{v}_t\le 0,
\een
where the domain of $Z$ is $(-\infty,\infty)$. Using Lemma \ref{sec3.18}, $\bar{u}\le \bar{v}$ in $\Om_T$ thus implying that $u\le v$ in $\Om_T$.
\end{proof}

\section{ \bf Super-solutions}

\vsp
In this section, we construct super-solutions of (\ref{sec1.3}) and these are used to prove Theorems \ref{sec2.6}, \ref{sec2.9} and \ref{sec2.8}. We have divided our work into two parts. Part I addresses the case $k>1$(or $\g>2$) and Part II discusses the case $k=1$ or $\g=2$. In each part, the work is further sub-divided to address various situations based on the values of $\s$. Since the auxiliary functions are non-negative, we assume that the domain of $Z$ is at least $(0,\infty)$, see discussion below. 

Part I has four sub-parts:
(i) $\s=0$, (ii) $0<\s<\g/2$, (iii) $\s=\g/2$ and (iv) $\s>\g/2$, and Part II has three sub-parts: (i) $0\le \s\le 1$, (ii) $1<\s\le 2$, and (iii) $\s>2$.

We recall from (\ref{sec2.5}) that 
\eqRef{sec4.0}
\Pc(t,w,w_t,Dw,D^2w):=H(Dw, D^2w+Z(w)Dw\otimes Dw)+\chi(t)|Dw|^\s-w_t, 
\ee
where $\s\ge 0$, and $Z(s)\ge 0$ and is a non-increasing continuous function of $s$. 

Let $m<\min (0, \inf_{\IR^n} h,\inf_{\IR^n_T}u )$, where $h$ is the initial data in (\ref{sec1.3}) and $u$ is the given sub-solution. We assume that the domain of $Z$ is at least $[m,\infty)$.

Recall from (\ref{sec2.3}) and (\ref{sec2.4}) that $\Ls=\sup_{\lam} \left( \max_{|e|=1}H(e, I+\lam e\otimes e) \right).$ We set 
\eqRef{sec4.1}
\al=\sup_{[0,T]} |\chi(t)| \;\;\mbox{and}\;\; M=\max\left( \Ls, 1\right).
\ee

We also recall from (\ref{sec2.51}) and (\ref{sec2.52}) that
$$k=k_1+1,\;\; \g=k_1+2=k+1,\;\;\g\ge 2\;\;\mbox{and}\;\; \gs=\frac{\g}{\g-2}\;\;\mbox{if $\g>2$.}$$
Moreover, $\g=2$ if and only if $k=1$($k_1=0$). 
\vsp
\NI{\bf Super-solutions:}
\vsp
\NI {\bf Part I ($k>1$):} Since $\g>2$, we see that
\eqRef{sec4.2}
\gs-1=\frac{2}{\g-2},\quad \gs-2=\frac{4-\g}{\g-2}>-1\quad\mbox{and}\quad k\gs-\g=\frac{\g}{\g-2}=\gs.
\ee
\vsp
We start with the case $0\le \s\le \g/2$ and first carry out some calculations that will hold for the entire interval $[0,\g/2]$. We will then discuss the cases $\s=0$, $0<\s<\g/2$ and
$\s=\g/2$ separately. 

Let $z\in \IR^n$ be fixed, set $r=|x-z|,\;\forall x\in \IR^n$, and define
\bea\label{sec4.4.0}w(x,t)=at+ b(1+t) v(r), \;v^{\prime}(r)\ge 0,\;\;\forall(r,t)\in \IR^n_T,\eea
where $a\ge 0$ and $0<b\le 1$ are to be determined. We do this in each of the three cases listed above and also calculate $\lim_{b\rightarrow 0} a,$ wherever it is meaningful.

Using (\ref{sec3.6}), (\ref{sec4.0}) and (\ref{sec4.1}), we get
\bea\label{sec4.4}
&&\Pc(t,w,w_t,Dw,D^2w)\nonumber \\
&&\quad\qquad=\frac{ [ b(1+t) v^\prime(r) ]^k}{r} H\left( e, \;I +\left( \frac{r v^{\prime\prime}(r)}{v^\prime(r)}-1 + b(1+t) r v^{\prime}(r) Z(w) \; \right)e\otimes e\;\right) \nonumber\\
&&\quad\qquad\qquad\qquad+\chi(t) [ b(1+t) v^\prime(r) ]^\s-a-bv(r)   \nonumber \\
&&\quad\qquad\le \frac{M [ b(1+T) ]^k v^\prime(r)^k}{r}  + \al \left [ b(1+T) \right ]^\s  (v^\prime(r))^\s-a-bv(r).
\eea
We use the above inequality in both Parts I and II. 
\vsp
For Part I, we take $v(r)=r^{\gs}$. Using (\ref{sec4.2}) and $k=\g-1$ in (\ref{sec4.4}), we find that
\bea\label{sec4.5.1}
&&\Pc(t,w,w_t, Dw, D^2w)\nonumber\\
&&\qquad\qquad \le  M [ b\gs(1+T) ]^k \frac{(r^{\gs-1} )^k }{r} + \al\left [ b \g^* (1+T)\right ]^\s  \left( r^{\g^*-1}\right)^\s-a- br^{\g^*}\nonumber \\
&&\qquad\qquad\le M\left[ \g^* (1+T)\right ]^k ( b^k r^{\gs} )
+ \al \left [ \g^* (1+T) \right ]^\s  ( b^\s r^{ 2\s/(\g-2)} )- a- ( br^{\g^*} ).
\eea

In order to write more compactly, we set 
$$E=M\left [\g^*  (1+T)\right ]^k\quad \mbox{and}\quad F= \left[  \g^* (1+T) \right ]^\s.$$
Thus, (\ref{sec4.5.1}) reads
\bea\label{sec4.40}
 \Pc(t,w,w_t,Dw,D^2w)\le E  (b^k r^{\gs})+\al F ( b^\s r^{2\s/(\g-2)}) -a- (br^{\gs}).
 \eea
 \vsp 
{\bf Sub-Part (i) ($\s=0$):} Taking $\s=0$ in (\ref{sec4.40}), we get that $F=1$ and
$$\Pc(t,w,w_t,Dw,D^2w)\le b(Eb^{k-1}-1) r^{\gs}+\al-a.$$
Select $a=\al$ and $0<b<\min(1,E^{1-k})$. Clearly, $w(x,t)$ is a super-solution in $\IR^n_T$ and
\eqRef{sec4.41}
w(x,t)=\al t+b(1+t)r^{\gs}. \quad\qquad \Box
\ee
\vsp
{\bf Sub-Part (ii) ($0<\s<\g/2$):} Since $\gs=\g/(\g-2)$, (\ref{sec4.40}) yields that
\bea\label{sec4.42}
\Pc(t,w,w_t,Dw,D^2w)&\le& E b^k r^{\gs}-br^{\gs}+\al F b^\s r^{2\s/(\g-2)}-a   \nonumber\\
&=&b r^{\gs} \left(  Eb^{k-1}-1+ \frac{ \al F b^{\s-1} }{ r^{(\g-2\s)/(\g-2) } } \right)-a. 
\eea
We choose 
\eqRef{sec4.43}
\left\{ \begin{array}{lcr} 0<b^{k-1}<\min\left(1, \;\; (4E)^{-1}\right),\;\;R= \left( 4 \al F b^{\s-1}\right)^{(\g-2)/(\g-2\s)},\\ 
\mbox{and}\;\;a= E b^k R^{\gs}+\al F b^\s R^{2\s/(\g-2)}.
\end{array}\right. 
\ee
The choice for $a$ shows that $w$ is a super-solution in $B_R(z)\times (0,T)$. In $r\ge R$, using $0<\s<\g/2$ and 
the selections for $b$ and $R$, stated in (\ref{sec4.43}), in (\ref{sec4.42}), we get
$$Eb^{k-1}-1+ \frac{\al F b^{\s-1} }{ r^{(\g-2\s)/(\g-2) } }\le -\frac{3}{4}+ \frac{ \al F b^{\s-1} }{ R^{(\g-2\s)/(\g-2) } }= -\frac{3}{4}+\frac{1}{4}=-\frac{1}{2}.$$
Thus, $w$ is a super-solution in $\IR^n_T$ for any $a$ and $b>0$ satisfying the requirement in (\ref{sec4.43}).
\vsp
We now evaluate $\lim_{b \rightarrow 0} a$. If $\s\ge 1$, it is clear from (\ref{sec4.43}) that $\lim_{b\rightarrow 0} a=0$.
Let $0<\s<1$. Using (\ref{sec4.43}), $\gs=\g/(\g-2)$ and $k=\g-1$, we obtain that, for some $K_1$ and $K_2$, independent of $b$,
\ben
b^k R^{\gs}&=&K_1 b^{\g-1} \left(  b^{ (\s-1) (\g-2)/(\g-2\s)} \right)^{\g/(\g-2)} =K_1 b^{(\g-\s)(\g-2)/(\g-2\s)},\\
\mbox{and}\;\;b^\s R^{2\s/(\g-2)}&=&K_2 b^\s \left( b^{(\s-1)(\g-2)/(\g-2\s)} \right)^{2\s/(\g-2)}=K_1b^{\s (\g-2)/(\g-2\s)}.
\een
It is clear that 
\eqRef{sec4.44}
 \lim_{b \rightarrow 0} a=0. \quad\qquad \Box
 \ee
\vsp
{\bf Sub-Part (iii) ($\s=\g/2$):} We modify $w$ as follows. Take
\eqRef{sec4.5}
w(x,t)=b (t+1) r^{\g^*},
\ee
where $b>0$ is to be determined. Note that
$$\gs=\frac{\g}{\g-2}=\frac{2\s}{ \g-2 }.$$

Taking $a=0$ in (\ref{sec4.40}) and observing that $k>1$ and $\g>2$, we get
\ben
 &&\Pc(t,w,w_t,Dw,D^2w) \\
 &&\quad\qquad\qquad\qquad \le E b^k r^{\gs}+\al F b^\s r^{2\s/(\g-2)}-br^{\gs}=Eb^kr^{\gs}+\al F b^{\g/2} r^{\gs}-b r^{\gs}\nonumber\\
 &&\quad\qquad\qquad\qquad =b r^{\gs} \left( Eb^{k-1}+\al F b^{(\g-2)/2}-1\right)\le 0,\nonumber
\een
if $0<b\le b_0$, for some $b_0=b_0(\al,k,\g, E, F)$ chosen small enough. Thus,
\eqRef{sec4.50}
w(x,t)=b(1+t) r^{\gs},\;\;\forall 0<b\le b_0,
\ee
is a super-solution in $\IR^n_T$.  $\Box$
\vsp
{\bf Sub-part (iv) ($\s>\g/2$):}  We use Remark \ref{sec3.11} and take
\eqRef{sec4.6}
w(x,t)=at+b(1+t)v(r),
\ee
where
$$v(r)=\int_0^{r^{\g^*} } \frac{1}{1+\tau^p}\;d\tau,\;\;\; p=\frac{\gs-\st}{\gs},\;\;\g^*=\frac{\g}{\g-2}\;\;\mbox{and}\;\;\st=\frac{\s}{\s-1}.$$
Here $a>0$ and $0<b\le 1$ are to be determined. Note that $v(r)$ grows like $r^{\gs}$ near $r=0$ and like $r^{\st}$ for large $r$. 

Recall (\ref{sec4.4}) i.e.,
\eqRef{sec4.7}
\Pc(t,w,w_t,Dw,D^2w)\le \frac{M [ b(1+T)]^k  v^\prime(r)^k}{r} + \al\left[ b(1+T) \right]^\s v^{\prime}(r)^\s-a-bv(r).
\ee
\vsp
We use parts (ii)-(viii) of Remark \ref{sec3.11}, $k=\g-1$ and $(\st-1)\s=\st$. Note that
\ben
&&( v^{\prime}(r) )^\s\le (\gs)^\s \min\left( r^{\st-1},\;r^{\gs-1} \right)^\s=(\gs)^\s \min \left( r^{\st},\;r^{2\s/(\g-2)} \right),\\
&&\mbox{and}\quad \frac{ (v^{\prime}(r))^k}{r}\le \min (\gs)^k \left( r^{k\st-\g},\;r^{k\gs-\g}\right)=(\gs)^k \min \left( r^{(\g-\s)/(\s-1)},\;r^{\gs}\right).
\een
Using the above in (\ref{sec4.7}) and recalling the definitions of $E$, $F$ (see the line following (\ref{sec4.5.1})) we get that
\bea\label{sec4.80}
\Pc(t, w,w_t, Dw, D^2w) \le Eb^k r^{(\g-\s)/(\s-1)} + \al Fb^\s r^{\st}-a-b v(r).
\eea
A lower bound for $v(r)$ is obtained by setting $R=1$ in Remark \ref{sec3.11}(iv) and ignoring $v(1)$. Taking $r\ge 1$, (\ref{sec4.80}) yields that
\bea\label{sec4.8}
\Pc(t, w,w_t, Dw, D^2w) &\le& Eb^k r^{(\g-\s)/(\s-1)} + \al Fb^\s r^{\st}-a-\frac{b \gs  \left( r^{\st} - 1 \right)}{2\st}   \nonumber\\
&=&Eb^k r^{(\g-\s)/(\s-1)} + \al Fb^\s r^{\st}+\frac{b\gs }{2\st}-a- \frac{ b\gs r^{\st}}{2\st},
\eea
where we have used that $1-p=\st/\gs$.

We select
\eqRef{sec4.85}
a= Eb^k+\al F b^\s +\frac{b\gs }{\st},
\ee
From (\ref{sec4.8}) and (\ref{sec4.85}), it follows that  
$w$ is a super-solution in $B_1(o)\times [0,T]$.

Since $r^{(\g-\s)/(\s-1)}\le r^{\st}$, in $r\ge 1$, using (\ref{sec4.85}) in (\ref{sec4.8}) implies that
\ben
\Pc(t, w_t, Dw, D^2w)& \le &Eb^k r^{\st}+ \al Fb^\s r^{\st}+\frac{b\gs }{2\st}-a- \frac{ b r^{\st}}{2\st}\\
&\le &b r^{\st} \left( E b^{k-1}+\al F b^{\s-1} -\frac{\gs}{2\st} \right)\le 0,
\een
if we select $0<b\le b_0$, where $b_0$ depends only on $\al,\g,\;\s,\;E$ and $F$, and is chosen small enough. 

Thus, $w$ is super-solution in $\IR^n_T$ and 
\eqRef{sec4.9}
\lim_{b\rightarrow 0} a=0.\qquad \Box
\ee
 \vsp
 {\bf Part II ($k=1$):} In this case, $\g=2$ and $k_1=0$. 
 
 By Remark \ref{sec2.40}(iv), $H(q, X)=H(X),\;\forall (q,X)\in \IR^n\times S^{n\times n}$. Thus, we work with
 $$H(D^2u+Z(u)Du\otimes Du)+\chi(t)|Du|^\s-u_t\ge 0,\;\;\mbox{in $\IR^n_T$ with $u(x,0)\le h(x),\;\forall x\in \IR^n$.}$$
 
We treat separately the three possibilities: (i) $0\le \s\le 1$, (ii) $1<\s\le 2$ and (iii) $2<\s<\infty$.
 
\vsp
{\bf Sub-Part (i) ($0\le \s\le1$):}  Take
\eqRef{sec4.11}
w(x,t)=at+b(1+t)v(r),\quad \forall (x,t)\in \IR^n_T,
\ee
where $a\ge 0$ and $0<b\le1$ are to be determined. 
\vsp
{\bf (a) ($\s=0$):} We choose 
$$v(r)=e^{cr^2}.$$
where $c>0$ is to be determined. We note the following elementary facts.
\ben
v^{\prime}(r)=2cre^{cr^2},\;\; \frac{v^{\prime}(r)}{r}=2c e^{cr^2},\;\;\;\mbox{and}\;\; \frac{ r v^{\prime\prime}(r)}{v^{\prime}(r)}=1+2cr^2.
\een
Using these in (\ref{sec4.4}) and using $\s=0$, we get
\ben
\Pc(t,w,w_t,Dw,D^2w)& \le& b(1+T)M  \left(  \frac{v^\prime(r)}{r} \right)+\al -a-bv(r)\\
&=& 2bc(1+T)M e^{cr^2}+\al -a-b e^{cr^2}.
\een
Set $a=\al$, $\bar{E}=2(1+T)M$ and $c=1/\bar{E}$ to obtain $\Pc(t,w,w_t,Dw,D^2w) \le 0$ in $\IR^n_T$.

Thus, 
$$ w(x,t)=\al t+ b(1+t) e^{r^2/\bar{E}},\quad \forall (x,t)\in \IR^n_T,$$
is a super-solution in $\IR^n_T$ for any $b>0$. Moreover,
\eqRef{sec4.131}
\lim_{b\rightarrow 0} w(x,t)=\al t. \qquad \Box
\ee

\vsp
{\bf (b) ($0<\s\le1$):}  For $a>0$, $0<b\le 1$ and $c>0$ (to be determined), we define
\eqRef{sec4.20}
w(x,t)=at+b(1+t) v(r),\;\;\mbox{in $\IR^n_T$},\;\; \mbox{where}\;\; v(r)=e^{cr}-(1+cr).
\ee

Thus,
$$v^{\prime}(r) =c \left( e^{cr}-1\right),\quad c^2\le \frac{v^{\prime}(r)}{r}\le c^2 e^{cr},\quad \mbox{and}\quad 1\le \frac{r v^{\prime\prime}(r)}{v^{\prime}(r)}\le \frac{ e{\max(1, cr)}}{e-1}.$$
In the last estimate, for $0 < \theta < 1$ we used that $\theta e^\theta/(e^\theta-1)$ is increasing  and for $1<\theta,$ we used
that $e^\theta/(e^\theta-1)$ is decreasing.

Applying the above to (\ref{sec4.4}), we obtain
\bea\label{sec4.200}
&&\Pc(t,w,w_t,Dw,D^2w)\nonumber \\
&&\quad\qquad\le b(1+T)M\left( \frac{v^\prime(r)}{r} \right)+ \al \left[ b(1+T) v^\prime(r) \right]^\s-a-bv(r)  \\
&&\quad\qquad\le bc^2(1+T) M e^{cr}+\al \left[ bc (1+T) \left( e^{cr}-1\right) \right]^\s -a- b\left( e^{cr}-1-cr\right).\nonumber
\eea
Set $\bar{E}=(1+T)M$ and $\bar{F}=\al (1+T)^\s$. A rearrangement of the above leads to 
\eqRef{sec4.141}
\Pc(t, w, w_t, Dw, D^2w)\le b(1+cr)+ \left(c^2\bar{E}\right) b e^{cr}+\left( c^\s \bar{F}\right) (be^{cr})^\s-b e^{cr}-a.
\ee  

Applying Young's inequality $(be^{cr})^\s\le (1-\s)+\s b e^{cr}$, (\ref{sec4.141}) implies that
\ben
\Pc(t, w, w_t, Dw, D^2w)&\le& b(1+cr)+e^{cr}\left( c^2\bar{E}+\s c^\s \bar{F}-1\right)+(1-\s)c^\s\bar{F}-a\\
&\le& \left[ (1-\s)c^\s \bar{F}-a \right]+b\left[ (1+cr)+  e^{cr}\left( c^2\bar{E}  +\s c^\s \bar{F}-1 \right) \right].
\een

Select $c>0$ such that $c^2\bar{E}+\s c^\s \bar{F}=1-\ve$, for a fixed small $0<\ve<1$. Hence,
\eqRef{sec4.201}
\Pc(t, w, w_t, Dw, D^2w)\le\left[ (1-\s)c^\s \bar{F}-a \right]+b\left[ (1+cr)-\ve e^{cr} \right].
\ee
\vsp
The maximum of the function $1+cr-\ve e^{cr}$ occurs at $r_0=c^{-1} \log(1/\ve)$ and the maximum value is $\log(1/\ve)$. Select
$$a= b \log(1/\ve)+(1-\s)c^\s \bar{F}.$$

Using the choice for $a$ in (\ref{sec4.201}), we get that $\Pc(t,w_t, Dw, D^2w)\le 0$, in $\IR^n_T$.
Thus, $w$ is a super-solution in $\IR^n_T$ and 
\eqRef{sec4.202}
\lim_{b \rightarrow 0} a=(1-\s)c^{\s}\bar{F}, \;\;0<\s\le 1.\qquad \Box
\ee
\vsp
Observe that if $\s=0$ then $\lim_{b \rightarrow 0}a=\al$. While this agrees with part (a), the growth rate allowed in part (a) is greater. Also, if we take $\s=1$, $\lim_{b\rightarrow 0} a=0$. 

\vsp

{\bf Sub-Part (iii) ($1<\s\le 2$):} For $a>0$ and $0<b\le 1$ (to be determined), we select
\eqRef{sec4.30}
w(x,t)=at+b(1+t) r^{\st},\;\;\forall(x,t)\in \IR^n_T,\;\;\mbox{where}\;\; \st=\frac{\s}{\s-1}.
\ee
\vsp
Note that $\st\ge 2$. Setting $v(r)=r^{\st}$, we find that
$$\frac{v^{\prime}(r)}{r}=\st r^{\st-2}=\st r^{(2-\s)/(\s-1)},\quad v^{\prime}(r)^\s=(\st)^\s r^{\st}\quad\mbox{and}\quad \frac{rv^{\prime\prime}(r)}{v^{\prime}(r)}=\st-1. $$
Using the above in (\ref{sec4.200}) or (\ref{sec4.4}) and recalling the definitions of $\bar{E}$ and $\bar{F}$ (see Sub-Part (ii)) we obtain that
\bea\label{sec4.31}
\Pc(t,w,w_t,Dw, D^2w)&\le& \bar{E} \left(\frac{b v^\prime(r)}{r}\right)+ \bar{F}  (bv^\prime(r))^\s-a-bv(r)   \nonumber\\
&=& (\st \bar{E} )b r^{\st-2}+( {\st}^{\s} \bar{F} ) b^{\s} r^{\st}-a-br^{\st}.
\eea

Choose
\ben
R=\sqrt{4\st \bar{E}},\;\; 0<b<  \left( \frac{1}{4 {\st}^\s \bar{F} } \right)^{1/(\s-1)} \;\;\mbox{and}\;\;a=(\st \bar{E}) bR^{\st-2}+ ( {\st}^{\s} \bar{F} )b^{\s} R^{\st}. 
\een
Employing the above values in (\ref{sec4.31}) and noting that $\st\ge 2$, we see that $w$ is super-solution in $[0,R]\times [0,T].$ In $r\ge R$, 
\ben
\Pc(t,w,w_t,Dw, D^2w)&\le&  (\st \bar{E}) br^{\st-2}+( {\st}^\s \bar{F} )b^\s r^{\st}-a-br^{\st}\\
&=&b r^{\st}\left( \frac{\st \bar{E}}{r^2}+ ({\st}^{\s} \bar{F} ) b^{\s-1} -1 \right)-a.
\een
Using the values of $R$ and $b$, it is clear that $w$ is super-solution in $\IR^n_T$. Moreover,
\eqRef{sec4.32}
\lim_{b\rightarrow 0} a=0.\qquad \Box
\ee
\vsp
{\bf Sub-Part (iv) $2<\s<\infty$:} We choose
\eqRef{sec4.60}
w(x,t)=at+b(1+t) v(r),\;\;\forall (x,t)\in \IR^n_T,
\ee
where 
$$v(r)=\int_0^{r^2} \frac{1}{1+\tau^p}\;d\tau\quad\mbox{with}\;p=1-\frac{\st}{2}=\frac{\s-2}{2(\s-1)}.$$
Observe that $2(1-p)=\st$ and also, that $v(r)$ is like $r^2$ near $r=0$ and like $r^{\st}$ for large $r$.

In Lemma \ref{sec3.10}, we set $\bt=2$ and $\bb=\st$. Thus, parts (iv), (v) and (vi) yield
\ben
&&\mbox{(iv)}\;\;\frac{1}{\st}\le \frac{v(r)-v(1)}{r^{\st}-1}\le \frac{2}{\st},\;\forall r\ge 1,\\
&&\mbox{(v)}\;\;v^{\prime}(r)\le 2\min\left( r^{1/(\s-1)},\;r\right)\;\;\mbox{and (vi)}\;\;\frac{v^{\prime}(r)}{r}\le 2.
\een
Using the above values and expressions in (\ref{sec4.200}) or (\ref{sec4.4}) and recalling $\bar{E}$ and $\bar{F}$,  we get
\bea\label{sec4.61}
\Pc(t, w, w_t, Dw, D^2w)&\le & b(1+T)M \left(\frac{v^\prime(r)}{r}\right)+ \left[ \al ( b(1+T) )^\s\right] (v^\prime(r))^\s-a-bv(r)   \nonumber\\
&\le&2b \bar{E}+(2b)^{\s}\bar{F}\min\left( r^{\st},\;r^\s\right)-a -b v(r).
\eea
We choose
\ben
 a=2b \bar{E}+(2b)^{\s}\bar{F}+\frac{b}{\st}\quad\mbox{and}\quad 0<b< \left( \frac{1}{2^{\s}\st \bar{F}}\right)^{1/(\s-1)}.
\een
Using the above, $w$ is a super-solution in $0\le r\le 1$ and $0\le t\le T$.

In $r\ge 1$, we employ values of $a$, $b$ and the bound $v(r)\ge (r^{\st}-1)/\st$ in (\ref{sec4.61}) to find that
 \ben
 \Pc(t, w, w_t, Dw, D^2w)&\le&2b \bar{E}+(2b)^{\s}\bar{F}\min\left( r^{\st},\;r^\s\right)-a -\frac{ b \left( r^{\st}-1\right) }{\st}  \nonumber \\
 &\le&(2b)^{\s}\bar{F} r^{\st}-\frac{ b r^{\st}}{\st}
 \le b r^{\st} \left( 2^\s b^{\s-1} \bar{F}-\frac{1}{\st}\right)\le 0.
 \een
Thus, $w$ is super-solution in $\IR^n_T$. Moreover,
 \eqRef{sec4.63}
 \lim_{b\rightarrow 0} a=0.\qquad \Box
 \ee
 
 We summarize: select $w(x,t)=at+b(1+t)v(r)$ where $v(r)$ is as follows
 \bea\label{sec4.64}
&&\qquad\qquad\mbox{}\\
&&\mbox{(I) $k>1$:}\;\;v(r)=\left\{ \begin{array}{ccc} r^{\gs},& 0\le \s\le \g/2,\\ \int_0^{r^{\gs}}(1+\tau^p)^{-1}d\tau,& \s>\g/2, \end{array}\right. 
\;\;\lim_{b\rightarrow 0}a=\left\{ \begin{array}{ccc} \al,& \s=0,\\ 0,& \s>0, \end{array}\right.    \nonumber\\
&&\quad \mbox{where}\;\;p=1-(\st/\gs), \nonumber\\
&&\;\mbox{(II) $k=1$:}\;v(r)=\left\{ \begin{array}{ccc} e^{cr^2},& \s=0,\\ e^{cr}-1-cr,& 0<\s\le 1,\\ r^{\st},& 1<\s\le 2,\\
\int_0^{r^2}(1+\tau^p)^{-1}d\tau,& \s>2, \end{array}\right. \;\;\lim_{b\rightarrow 0}a=\left\{ \begin{array}{ccc} \al,& \s=0,\\ (1-\s)c^\s \bar{F},& 0<\s\le 1,\\ 
0,& \s>1. \end{array}\right.  \nonumber\\
&&\qquad \;\mbox{where}\;p=1-(\st/2). \nonumber
 \eea
See (\ref{sec4.41}), (\ref{sec4.44}) and (\ref{sec4.9}), (\ref{sec4.131}), (\ref{sec4.202}), (\ref{sec4.32}) and (\ref{sec4.63}). Recall that $v(r)$ grows like $r^{\st}$ in (I) (for $\s>\g/2$) and in (II) (for $\s>2$). 
 \vsp
\section{Sub-solutions}
\vsp
The work in this section is quite similar to that in Section 4. Although, $H$ is not assumed to be odd in $X$, the auxiliary functions used  in Section 4 continue to apply here. We will not repeat the calculations done in Section 4, instead, provide an outline as to how to use them to obtain sub-solutions. We require that the domain for $Z$ be $(-\infty, \infty).$
\vsp

We use functions of the type $w(x,t)=-\left[at+b(1+t) v(r)\right],$
where $a>0$ and $b>0$, small, $v(r)>0$ and $v^{\prime}(r)\ge 0$. Recalling (\ref{sec3.8}), we see that 
\bea\label{sec4.14}
&&\Pc(t,w, w_t, Dw, D^2w) \nonumber\\
&&\qquad\qquad=\frac{ [ b(1+t)v^{\prime}(r) ]^k}{ r } H\left( e, \left(1- \frac{r v^{\prime\prime}(r)}{v^\prime(r)} + b(1+t)rZ(w) v^\prime(r)\right)e\otimes e -I \right) \nonumber\\
&&\qquad\qquad\qquad\qquad+\chi(t) [ b (1+t) v^{\prime}(r)]^\s+a+b v(r).
\eea

We set
$$\al=\sup_{[0,T]}|\chi(t)|\;\;\;\;\mbox{and}\;\;\;N=\inf_\lambda\left( \min_{|e|=1}H(e, \lambda   e\otimes e-I)  \right).$$
We note that $N\le 0$ since $H(e, -I)\le 0$, see Condition C in Section 2.
\vsp
As done in Section 4, we take $v(r)$ to be either a power of $r$(power greater than $1$) or $e^{cr^2}$ or $e^{cr}$. For the exponential type functions, since $1-(rv^{\prime\prime}(r))/v^{\prime}(r)$ could become unbounded, a lower bound on $H$ is needed. However, $1-(rv^{\prime\prime}(r))/v^{\prime}(r)$ is bounded from below if $v(r)$ is a power of $r$ and the bound depends on the power. Since $H$ is continuous and non-decreasing in $X$, we get a natural lower bound depending on the power of $r$. We use $N$ to denote the lower bound in both situations. 

With the above discussion in mind, (\ref{sec4.14}) implies
\bea\label{sec4.140}
&&\Pc(t,w,w_t, Dw, D^2w)\ge \frac{ [ b(1+T)v^{\prime}(r) ]^k N}{ r}-\al [ b (1+T) v^{\prime}(r)]^\s+a +b v(r)  \nonumber\\
&&\quad\qquad\qquad\qquad\qquad=-\left( \frac{ [ b(1+T)v^{\prime}(r) ]^k |N|}{ r } +\al [b (1+T) v^{\prime}(r)]^\s-a-b v(r) \right).
\eea
\vsp

We now use auxiliary functions $v(r)$ that are similar to those in Section 4. The goal is to choose $a\ge 0$ and $0<b<1$ such that the expression in (\ref{sec4.140}) is non-positive i.e,
$$ \frac{ [ b(1+T)v^{\prime}(r) ]^k}{ r } |N|+\al [ b (1+T) v^{\prime}(r)]^\s-a-b v(r)\le 0.$$
The analysis is almost identical to Section 4. We list the choice for $w(x,t)$ for the various values of $\s$. 

{\bf Part I\;\;$k>1$:} Recall that $\g>2$ and $\gs=\g/(\g-2)$. Set $r=|x-z|$, for some fixed $z\in \IR^n$, and take
\ben
&&w(x,t)=\left\{ \begin{array}{ccc}  -at-b(1+t) r^{\gs}, & 0\le \s<\g/2,\\ -b(1+t) r^{\gs},& \s=\g/2,\\ -at-b(1+t)v(r), & \s>\g/2, \end{array}\right.\\
&&\mbox{where}\quad v(r)=\int_0^{r^{\gs} } \frac{1}{1+\tau^p}\; d\tau\;\; \mbox{with}\;\; \st=\frac{\s}{\s-1},\;p=1-\frac{\st}{\gs}=\frac{2\s-\g}{\g(\s-1)}.
\een 
It is easy to check that (see Remark \ref{sec3.11}) that
$$1-\frac{rv^{\prime\prime}(r)}{v^{\prime}(r)}=\frac{ 2-\gs+(2-\st) r^{p\gs} }{1+r^{p\gs}} \ge 2-\st>-\infty. $$
We choose $N$ to be an appropriate lower bound for $H$, see the right hand side of (\ref{sec4.14}). Thus, (\ref{sec4.140}) holds without any restrictions on $\inf_\lambda\left[ \min_{|e|=1}H(e, \lambda   e\otimes e-I)  \right].$
Moreover, from (\ref{sec4.64}),
\eqRef{sec4.15}
\lim_{b\rightarrow 0} a=\left\{ \begin{array}{ccc} \al, & \s=0,\\ 0,& \s>0. \end{array}\right.  \qquad \Box
\ee
\vsp
{\bf Part II\;\;$k=1$:} In this case, $\g=2$ and $k_1=0$. Set $\st=\s/(\s-1)$. We choose $a\ge 0$, $0<b<1$ and $c>0$ such that (\ref{sec4.140}) in non-positive. We select
\ben
&&w(x,t)=\left\{ \begin{array}{ccc} -a-b(1+t)e^{cr^2},& \s=0,\\ -at-b(1+t)\left( e^{cr}-1-cr\right),& 0<\s\le 1,\\ -at-b(1+t) r^{\st},& 1<\s\le 2,\\
-at-b(1+t)v(r),& 2<\s<\infty, \end{array}\right.\\
&&\mbox{where}\;\;v(r)=\int_0^{r^2} \frac{1}{1+\tau^p}\;d\tau\;\;\mbox{with}\;\; p=1-\frac{\st}{2}=\frac{\s-2}{2(\s-1)}.
\een
\vsp
If $0\le \s\le 1$ then $1-rv^{\prime\prime}(r)/v^{\prime}(r)\le 0$ becomes unbounded as $r\rightarrow \infty$. Thus, we impose that $|\inf_\lambda\left[ \min_{|e|=1}H(e, \lambda   e\otimes e-I)  \right] |<\infty.$ For $\s>1$, however, no such requirement is made. 

Moreover, from (\ref{sec4.64}), 
\eqRef{sec4.16}
\lim_{b\rightarrow 0} a=\left\{ \begin{array}{ccc} \al, & \s=0,\\ (1-\s) \al (c(1+T))^\s,& 0<\s\le 1,\\ 0, & \s>1. \end{array}\right.\qquad \Box
\ee
\vspp
\section{Some Special cases}
\vsp

In this section we consider some special cases. Recall that
\eqRef{sec6.0}
\Pc(t,w, w_t, Dw, D^2w)=H(Dw, D^2w+Z(w)Dw\otimes Dw)+\chi(t) |Dw|^\s-w_t.
\ee
As before, set
$$N=\inf_\lam \left[  \min_{|e|=1}H(e,\lam e\otimes e-I) \right].$$ 
\vsp
We discuss the following three cases.

{\bf Case (i):} $k\ge 1$ and $\chi\equiv 0$. The equations reads
$$H(Dv, D^2v+Z(v)Dv\otimes Dv)-v_t=0,\;\;\mbox{in $\IR^n_T$, $v>0$, with $v(x,0)=h(x),\;\forall x\in \IR^n$.}$$

As observed in (\ref{sec1.1}), (\ref{sec1.2}) and part (f) of Remark \ref{sec3.19}, this applies to the doubly nonlinear case by employing a change of variables. Moreover, as noted in
Remark \ref{sec3.19} and Lemma \ref{sec3.20}, the convergence or the divergence of the integral 
$$I=\int_0^1 f^{-1/(k-1)}(\tht) \; d\tht,\;\;k>1, $$
determines the domain of $Z$. In particular, if $I<\infty$ then the domain of $Z$ is $(0,\infty)$ or $[0,\infty)$, and if $I=\infty$ then the domain is $(-\infty, \infty)$. 

The super-solutions in Section 4, (in particular, the one in Sub-Part (i) or Part I) being positive, are also super-solutions of (\ref{sec6.0}) regardless the domain of $Z$. However, the domain of $Z$ needs to be stated more precisely for sub-solutions. If the integral $I$ diverges then the work in Section 5, in particular, Part I applies since the domain of $Z$ is $(-\infty, \infty)$. If $I$ converges then the domain is $(0,\infty)$ or $[0,\infty)$ and a different sub-solution needs to be calculated. We do this in this section. 

We also include here the case $k=1$ where $Z$ is defined on $(0,\infty)$ or $[0,\infty)$. The two Part II's in Sections 4 and 5 address the case where the domain is $(-\infty, \infty)$.
\vsp
The next two cases bring out the influence of the sign of $\chi$. 

{\bf Case (ii):} We discuss super-solutions in the case $\chi\le 0$ and we derive a maximum principle. 
 
{\bf Case (iii):} We study sub-solutions for
$\chi\ge 0$ and this leads to a minimum principle.

The cases {\bf (ii)} and {\bf (iii)} are related.
\vsp

Let $z\in \IR^n$ be a fixed and set $r=|x-z|,\;\forall x\in \IR^n$.

We begin with Case (i).
\vsp
{\bf Case (i-1):} We take $k>1$, $\chi\equiv 0$, $\s=0$ and assume that the domain of $Z$ contains $(0,\infty)$. Thus, the equation reads
$$\Pz(t,w, w_t, Dw, D^2w)=H(Dw, D^2w+Z(w)Dw\otimes Dw)-w_t.$$

Since our goal is to construct positive sub-solutions $w$, it suffices to find a $w$ such that
$H(Dw, D^2w)-w_t\ge 0$ since ellipticity ($Z\ge 0$) implies the desired conclusion.  

Let $R>0$ and set $B^R_T=B_R(z)\times (0,T)$. We construct a sub-solution $w$ for any large $R$. More precisely, $w\ge 0$ solves 
$$H(Dw, D^2w)-w_t\ge 0,\;\;\mbox{in $B^R_T$ and $w(x,0)\le g(x),\;\forall x\in B_R(z)$.}$$

We define
\eqRef{sec6.900}
w(x,t)=\psi(t)v(r)= \frac{D \left[ R^{ (k+1)/k}-r^{(k+1)/k} \right]^{k/(k-1)} } { (E+t)^{1/(k-1)} },\;\;\forall(x,t)\in B^R_T,
\ee
where $D,\; E>0$ are to be determined.
One recalls from (\ref{sec3.7}) that if $w=\psi(t) v(r)$, with $w_r\le 0$, then
\bea\label{sec6.90}
&&H(Dw, D^2w)-w_t=\frac{  \left( |\psi(t) v^{\prime}(r)| \right)^k }{r} H\left(e, \left( 1-\frac{rv^{\prime\prime}(r)}{v^{\prime}(r)  } \right)e\otimes e-I \right)-v(r)\psi^{\prime}(t) \nonumber\\
&&\qquad\qquad\qquad\qquad\ge -\frac{|N|  \left( |\psi(t) v^{\prime}(r)| \right)^k }{r}-v(r)\psi^{\prime}(t).
\eea
Using the expression for $w$ and setting $c_k= \left[ (k+1)/(k-1) \right]^k$, we see that
\ben
&&-v(r)\psi^{\prime}(t)- \frac{ |N| \left( |\psi(t) v^{\prime}(r)| \right)^k }{r}=\frac{ D v(r)}{ (k-1) ( E+t)^{k/(k-1)} }-\frac{  c_k |N| D^k v(r)}{ (E+t)^{k/(k-1)} }  \\
&&\qquad\qquad\qquad\qquad\qquad\quad=\frac{ D v(r)}{ (k-1) ( E+t)^{k/(k-1)} } \left[ 1- (k-1) c_k |N| D^{k-1} \right].
\een
Choosing
$$D=  \left(\frac{ 1}{c_k(k-1) |N|  } \right)^{1/(k-1)},$$
and using the above in (\ref{sec6.90}), we get a sub-solution $w\ge 0$ in $B^T_R$ such that $w(R,t)=0$. Next, we calculate $E$ by requiring that  
$$w(z,0)=w(0,0)=\frac{ D R^{(k+1)/(k-1)} }{ E^{1/(k-1)} }=\inf_x h(x)=\mu.$$
Thus,
\ben
w=\frac{ D R^{(k+1)/(k-1)}} {E^{1/(k-1)} } \frac{  \left[ 1-(r/R)^{(k+1)/k} \right]^{k/(k-1)} } { (1+(t/E) )^{1/(k-1)} } =  \frac{ \mu \left[ 1-(r/R)^{(k+1)/k} \right]^{k/(k-1)} } { (1+(t/E) )^{1/(k-1)} }.
\een
Note that $E=O(R^{k+1})$ and 
$$
w(z,t)=w(0,t)= \frac{\mu }{   (1+(t/E) )^{1/(k-1) } }\rightarrow \mu \;\;\mbox{as $R\rightarrow \infty$}.
$$
We record that in $0\le r<R$, 
\eqRef{sec6.91}
w(x,t)= \frac{ \mu \left[ 1-(r/R)^{(k+1)/k} \right]^{k/(k-1)} } { (1+(t/E) )^{1/(k-1)} },\;\;\mbox{where}\;\;E=\frac{R^{k+1}}{ c_k  \mu^{k-1}(k-1)|N|}.
\ee
\vsp
{\bf Case (i-2):} We now study $k=1$. We take $w(x,t)=D e^{-Er^2}e^{-Ft}$ and recall (\ref{sec6.90}). We get
\ben
&&-|N| \frac{  \psi(t)| v^{\prime}(r)| }{r}-v(r)\psi^{\prime}(t)\\
&&\qquad\qquad\qquad\qquad=DF e^{-Er^2}e^{-Ft}-|N| 2DE e^{-Er^2} e^{-Ft}=De^{-Er^2} e^{-Ft} \left( F-2|N| E \right)
\een
We take $F=2|N|E$ and $D=\mu$ and obtain a sub-solution
\eqRef{sec6.92}
w(x,t)=\mu e^{-Er^2} e^{-2|N| E t},\;\;\forall E>0.\ee

It is clear that $W\rightarrow \mu$ as $E\rightarrow 0.$  $\Box$
\vspp
{\bf Case (ii):} We consider 
\eqRef{sec6.1}
\Pc(t, w, w_t, Dw, D^2w)=H(Dw, D^2w+Z(w) Dw\otimes Dw)+\chi(t)|Dw|^\s-w_t
\ee
where $\chi\le 0$. We set 
$$\hat{\al}=\sup_{(0,T)}\chi(t)$$
and assume that $\hat{\al}<0$. We further assume that 
\eqRef{sec6.2}
k\ge 1\quad \mbox{and}\quad \s\ge k.
\ee
Our goal here is to construct super-solutions $w\ge 0$, i.e., $\Pc(t, w, w_t, Dw, D^2w)\le 0$ in cylinders $B^R_T$. 

Selecting $w(x,t)=at+(1+t)v(r),\;v^\prime\ge 0$,  setting $M=\sup_\lam \left[ \max_{|e|=1} H(e, I+\lam e\otimes e) \right]$ and recalling (\ref{sec3.6}) and (\ref{sec6.1}) we find that
\bea\label{sec6.3}
&&\Pc(t,w,w_t,Dw,D^2w)\nonumber\\ 
&&\qquad\le \frac{ [(1+t) v^\prime(r)]^k}{r} H\left( e, \;I +\left( \frac{r v^{\prime\prime}(r)}{v^\prime(r)}-1 + (1+t) r v^{\prime}(r) Z(w) \; \right)e\otimes e\;\right) \nonumber\\
&&\quad\qquad\qquad\qquad-|\hat{\al}| [(1+t) v^\prime(r)]^\s-a-v(r) \nonumber\\
&&\qquad\le \frac{  [(1+t) v^\prime(r)]^k }{r} \left\{  M -|\hat{\al}| [(1+t) v^\prime(r)]^{\s-k} r \right\}-a- v(r).
\eea

For $R>0$, set 
$$v(r)=\left( R^2-r^2\right)^{-1},\;\;0\le r<R.$$
Since 
$$v^{\prime}(r)=(2 r) ( R^2-r^2 )^{-2},$$ 
(\ref{sec6.3}) yields that, in $0\le r<R$, 
\bea\label{sec6.4}
&&\Pc(t,w,w_t,Dw,D^2w) \le \frac{ [2(1+t)]^k r^{k-1}  }{ (R^2-r^2)^{2k}} \left( M-   |\hat{\al}|   \left( \frac{ 2 (1+t) }{ (R^2-r^2)^2 } \right)^{\s-k} r^{\s-k+1}  \right)-a  \nonumber \\
&&\qquad\qquad\qquad\qquad\qquad\qquad-\frac{1}{ R^2-r^2 }.
\eea
\vsp
{\bf Sub-Case (ii-1) ($\s=k$:)} Set $r^*=M/\hat{\al}$ and take $R>r^*$. Then (\ref{sec6.4}) yields that
\eqRef{sec6.5}
\Pc(t,w,w_t,Dw,D^2w) \le \left( \frac{ 2(1+t)}{ (R^2-r^2)^{2} }\right)^k \left(  M  -  |\hat{\al}| r \right)r^{k-1}-a-\frac{1}{ R^2-r^2}.
\ee
Select
$$a=M\left( \frac{ 2(1+T) }{ (R^2-(r^*)^2)^{2} }\right)^k  (r^*)^{k-1}.$$
With this choice (\ref{sec6.5}) shows that $w$ is a super-solution in $B^R_T$. 
Thus, 
\eqRef{sec6.6}
w(x,t)=at+(1+t) v(r)\;\;\mbox{and}\;\;\lim_{R\rightarrow \infty}a=0.\qquad \Box
\ee
\vsp
{\bf Case (ii-2) ($\s>k$:)} From (\ref{sec6.4}) we have that
\ben
&&\Pc(t,w,w_t,Dw,D^2w) \nonumber\\
&&\qquad \qquad \le \left(  \frac{ 2(1+t) }{ (R^2-r^2)^{2} }\right)^k  \left( M - |\hat{\al}|  \left( \frac{2 }{ (R^2-r^2)^{ 2 } } \right)^{\s-k} r^{\s-k+1} \right)r^{k-1}-a.
\een
Since the function $f(r)=r^{\s-k+1} \left( R^2-r^2 \right)^{2(k-\s)}$ is  continuous and  increasing in $0\le r<R,$  f(0)=0 and $f(r)\rightarrow \infty$, as $r\rightarrow R$, there is an $r^*=r^*(R)<R$ such that 
$2^{\s-k}  |\hat{\al}|  f(r^*)=M$. Choose 
$$a=M\left(  \frac{ 2 (1+T) }{ (R^2-(r^*)^2)^{2} }\right)^k (r^*)^{k-1}.$$
Clearly, $w$ is super-solution in $0\le r<R$.

Next, we recall that
$$f(r^*)=  \frac{ (r^*)^{\s-k+1}  }{ [R^2-(r^*)^2]^{2(\s-k)} }   =\frac{ M}{ 2^{\s-k} |\hat{\al}| } \; .$$
Clearly, $r^*\rightarrow \infty$, as $R\rightarrow \infty$. For calculating $\lim_{R\rightarrow \infty} a$, we use the formula for $f(r^*)$ and observe that for an appropriate constant $D$, we have
$$ \frac{ (r^*)^{k-1} } { [ R^2-(r^*)^2]^{2k} }=\frac{ D (r^*)^{k-1}}{ (r^*)^ { k(\s-k+1)/(\s-k) } }=\frac{D}{ (r^*)^{1+k/(\s-k)}}.$$
Thus,
\eqRef{sec6.7}
\lim_{R\rightarrow \infty}a=0.\qquad \Box
\ee

\vspp
{\bf Case (iii) Sub-solution:} 
We construct a function $w(x,t)$ such that 
\ben
&&\Pc(t, w, w_t, Dw, D^2w)\\
&&\qquad\qquad=H(Dw, D^2w+Z(w)Dw\otimes Dw)+\chi(t)|Dw|^\s-w_t\ge 0,\;\;\mbox{in $\IR^n_T$,}
\een 
where $\chi\ge 0$. Set
$$N=\inf_\lam \left[ \min_{|e|=1} H(e, \lam e\otimes e-I) \right]\qquad\mbox{and}\qquad \hat{\al}=\inf \chi(t).$$

Select $w(x,t)=-at-(1+t)v(r),\;v^{\prime}\ge 0,$ and recall (\ref{sec3.8}):  
\bea\label{sec6.8}
&&\Pc(t,w, w_t, Dw, D^2w) \nonumber\\
&&\qquad\qquad=\frac{ [ (1+t)v^\prime(r)]^k}{ r } H\left( e, \;\left(\;  (1+t)rv^{\prime}(r)Z(w)\;+1   - \frac{rv^{\prime\prime}(r)}{v^\prime(r)}\right)e\otimes e -I \right) \nonumber\\
&&\quad\qquad\qquad\qquad +\chi(t) [  (1+t) v^{\prime}(r) ]^\s+a+  v(r) \nonumber\\
&&\qquad\qquad \ge - \left[ \frac{ [ (1+t)v^\prime(r)]^k}{ r } |N| -\chi(t) [  (1+t) v^{\prime}(r) ]^\s-a-  v(r) \right].
\eea
\vsp
Defining
$$v(r)=\frac{1}{ R^2-r^2 },\;\;\forall\,\, 0\le r<R,$$
and proceeding as in Case (ii), one can construct a sub-solution $w$ with the same properties. $\Box$

\begin{rem}\label{sec6.100} We point out that, except for Case (i) in this section all the auxiliary functions in this work are of the kind $w(x,t)=at+b(1+t)v(r)$, where $v(r)$ is an appropriately chosen function, $r=|x-z|,\;\forall x\in \IR^n$ and $z\in \IR^n$ is fixed. 

Case (i) is used in proving the minimum principle in Theorem. For $k>1$ we utilize $w$ in (\ref{sec6.91}) and for $k=1$ we use $w$ in (\ref{sec6.92}). Note that $k>1$ requires no lower bound except $u>0$, however, for $k=1$ we assume a lower bound. 

Case (ii) implies a maximum principle without any imposition of an upper bound. Case (iii) leads to a minimum principle without requiring any lower bound.

We provide details in Section 7. $\Box$
\end{rem}
\vsp
\section{Proofs of the main results} \label{Section 7}
\vsp
Assume that $-\infty<\inf_{\IR^n} g\le \sup_{\IR^n} g<\infty$ and set
$$\mu=\inf_{\IR^n}g\;\;\;\mbox{and}\;\;\;\nu=\sup_{\IR^n}g.$$
\vsp

\NI{\bf Proofs of Theorems \ref{sec2.6} and \ref{sec2.7}: ($k>1$)}
\vsp
We first present the proof of Theorem \ref{sec2.6}. 
 Select $\ve>0$ small and $R_0>0$ such that
\eqRef{sec7.1}
\sup_{ [0,R]\times[0,T]}u(x,t)\le \ve R^{\bt},\;\;\;\forall\;R\ge R_0.
\ee
where $\bt$ is as described in the statements of the theorem.
  
Recall from (\ref{sec4.4.0}) and (\ref{sec4.64})  that the super-solution $w(x,t)$ can be written as
\ben
w(x,t)=at+b(1+t){v}(r),
\een
for an appropriate ${v}(r)>0$. Observe that $w$ is a super-solution for any small $b>0$. Also, ${v}$ grows like $r^\bt$, see (\ref{sec4.64}) and the constructions of the super-solutions in Section 4. Define
$$W(x,t)=\nu+w(x,t).$$

Let $\hat{k}>2$ be a constant so that $\hat{k}v\ge r^\bt$ for $r\ge R_1$, where $R_1$ is large.  We take $b=\hat{k}\ve$ in $W(x,t)$ and consider the cylinder $B_{R}(z)\times [0,T]$, 
where $R\ge$ max$(R_0, R_1)$. At $t=0$, $W(x,0)=\nu+\hat{k}\ve {v}(r)\ge  \nu \ge u(x,0)$. On $|x-z|=R$,
$$W(x,t)\ge \hat{k}\ve {v}(R)\ge \ve R^\bt.$$
Thus, $W\ge u$ on the parabolic boundary of $B_{R}(z)\times (0,T).$ We apply Lemma \ref{sec3.18} to conclude that $W\ge u$ in $B_R(z)\times(0,T)$ for any $R$, i.e.,
\ben
u(x,t)\le at+\hat{k}\ve(1+t)v(r)+\nu,\;\;\forall |x-z|\le R.
\een
Taking $x=z$, we get that $u(z,t)\le at+\nu$. Letting $R\rightarrow \infty$ and then $\ve\rightarrow 0$ (i.e. $b\rightarrow 0$) and using (\ref{sec4.64}) (employ $\lim_{b\rightarrow 0} a$) we obtain the conclusion of the Theorem. 

The proof of Theorem \ref{sec2.7} can be obtained by using Part I of Section 5 and arguing analogously. We omit the details.  
\hfill $\Box$
\vsp
\NI{\bf Proofs of Theorems \ref{sec2.9} and \ref{sec2.10}: ($k=1$)} 

We first prove Theorem \ref{sec2.9}. We recall (II) in (\ref{sec4.64}). 

We take $\s=0$. Let $0<\ve<c/10$ be small  and fixed. Set
$$W(x,t)={\nu}+\al t+{ \ve}(1+t)e^{cr^2},\;\forall (x,t)\in \IR^n_T.$$
Then $W$ is super-solution for any small {$\ve>0$. }

Choose $R_0>0$ such that $\sup_{B_R(z)\times [0,T]} u(x,t)\le e^{\ve R^2}$ and
$\ve e^{cR^2}>e^{\ve R^2},\;\forall R>R_0$.

We apply the comparison principle Lemma \ref{sec3.18} to prove the claim in the theorem. 
Observe that $W(x,0)\ge { \nu}\ge u(x,0),\;\forall x\in \IR^n_T.$ On $|x-z|=R>R_1$, $W(x,t)\ge \ve e^{cR^2}\ge e^{\ve R^2}.$ By Lemma \ref{sec3.18}, $u(x,t)\le W(x,t),\;\forall (x,t)\in B_R(z)\times (0,T),$ for any $R>R_0$. Hence,
$$u(z,t)\le W(z,t)={ \nu +}\al t+\ve(1+t){ e^{\ve r^2}}.$$
Since the above holds for any large $R$, we let $\ve\rightarrow 0$ to obtain the claim in part (a).

Part (b) may now be shown by arguing as above. Part (c) may be shown by following the ideas in the Proof of Theorem \ref{sec2.6}. Theorem \ref{sec2.10} follows analogously, see Part II in Section 5.  $\Box$

\vsp
We now present the proof of Theorem \ref{sec2.8}. We start with the maximum principle. 
\vsp
{\bf Proof of Theorem \ref{sec2.8}(a): (Maximum principle)} We refer to Remark \ref{sec3.19} and the comparison principle in Lemma \ref{sec3.20}. 
We set $\al=0$ in part (a) of Theorem \ref{sec2.6}. Suppose that $u=\phi(v)$ where the change of variable is as in Remark \ref{sec3.19}. If 
$$H(Du, D^2u)-f(u)u_t\ge 0,\;\;\mbox{in $\IR^n_T$, $u>0$, with $u(x,0)\le g(x),\;\forall x\in \IR^n$,}$$
then
$$H(Dv, D^2v+Z(v) DV\otimes Dv)-v_t\ge 0,\;\;\mbox{in $\IR^n_T$, with $v(x,0)\le \phi^{-1}(g(x)),\;\forall x\in \IR^n$,}$$
where $Z(s)=\phi^{\prime\prime}(s)/\phi^{\prime}(s)$ and the domain of $Z$ contains $(0,\infty)$. See Remark \ref{sec3.19}. 

The super-solution $w$ used in the proof of Theorem \ref{sec2.6} is positive. Clearly, $Z(w)$ is well-defined. Using Lemma \ref{sec3.18} (or Lemma \ref{sec3.20}) and  
arguing as in the proof of Theorem
\ref{sec2.6} we get that $v\le \sup_x \phi^{-1}(g(x))$, if 
$$ \sup_{B_R(z)\times (0,T)} v(x,t)=o( R^{\gs})\;\;\mbox{as $R\rightarrow \infty$.}$$
Thus, the claim holds for $u$.
\vsp
{\bf Proof of Theorem \ref{sec2.8}: (Minimum principle)} 

(i) Suppose that 
$$\lim_{\dl\rightarrow 0^+}F(1)-F(\dl)<\infty.$$
We choose
$$v=\phi^{-1}(u)=\int_0^u  f^{-1/(k-1)}(\tht)d\tht,\;\;u>0.$$
Then
$$H(Dv, D^2v+Z(v) Dv\otimes Dv)-v_t\le 0,\;v>0,\;\mbox{in $\IR^n_T$ with $v(x,0)\ge \phi^{-1}(g(x)),\;\forall x\in \IR^n,$}$$
where the domain of $Z$ contains $(0, \infty)$. We recall Case(i-1) from Section 6 and (\ref{sec6.91}) i.e.,
$$w(x,t)= \frac{ \hat{\mu} \left[ 1-(r/R)^{(k+1)/k} \right]^{k/(k-1)} } { (1+(t/E) )^{1/(k-1)} },\;\;\mbox{where}\;\;E=\frac{R^{k+1}}{ c_k  {\hat\mu}^{k-1}(k-1)|N|},$$
for any large $R>0$. Here $\hat{\mu}=\phi^{-1}(\mu).$

We use comparison in $B_R(z)\times [0,T]$. It is clear that $v(x,0)\ge \phi^{-1}(g(x)){ \ge} w(x,0),$ $\forall |x-z|<R.$ Since $v>0$ in $\IR^n_T$, working with $R^{'}<R$, close to $R$, we see that
$v(x,t)\ge w(x,t)$. Applying Lemma \ref{sec3.18} to the parabolic boundary of $B_{R^{'}}(z)\times (0,T)$, we get that $v(x,t)\ge w(x,t)$ in $B_{R^{'}}(z)\times (0,T)$. Thus,
$$v(z,t)\ge w(z,t)=\frac{ \hat{\mu} } { (1+(t/E) )^{1/(k-1)} },$$
Letting $R\rightarrow \infty$ (i.e. $E\rightarrow \infty$), we get that $v(z,t)\ge \hat{\mu}$ and the claim follows for $u$.
For $k=1$ and $f\equiv 1$, we use Case (i-2) in Section {{6}} and (\ref{sec6.92}) and assume that $\inf_{B_R(z)\times [0,T]} u(x,t)\ge  { \mu} e^{-\ve R^2}$, where $R>0$ is large enough and $\ve>0$ is small but fixed. Recall from (\ref{sec6.92}) that
$$w(x,t)=\mu e^{-Er^2} e^{-2|N| E t},\;\;\forall E>0,$$
is a sub-solution in $\IR^n_T$. Working in cylinders $B_R(z)\times (0,T)$, for large $R$, we find that $u(x,0)\ge \mu\ge w(x,0)$, for any $E>0$. Fix an $E>\ve$. On $|x-z|=R$, 
$w(x,t)\le u(x,t)$ implying that $w(x,t)\le u(x,t)$ in $B_R\times (0,T)$, for any large $R$, and, hence, in $\IR^n_T$. Thus,
$$w(z,t)=\mu e^{-2|N|E t}\le u(z,t),\;\forall E>\ve.$$
Since the above holds for any $R$ and, hence, for any $\ve>0$, we get that the above estimate holds for any $E>0$. Clearly, the claim holds.
\vsp
(ii) Suppose that 
$$\int_0 ^1 f^{-1/(k-1)}(\tht)d\tht<\infty.$$
We choose
$$v=\phi^{-1}(u)=\int_0^u  f^{-1/(k-1)}(\tht)d\tht,\;\;u>0.$$
Then
$$H(Dv, D^2v+Z(v) Dv\otimes Dv)-v_t\le 0,\;\;\mbox{in $\IR^n_T$ with $v(x,0)\ge \phi^{-1}(g(x)),\;\forall x\in \IR^n,$}$$
where the domain of $Z$ contains $(0, \infty)$. This is similar to the proof of Theorem \ref{sec2.10}. 

The case $k=1$ and $f\equiv 1$ also follows in an analogous way. $\Box$

\vsp
\NI Department of Mathematics, Western Kentucky University, Bowling Green, Ky 42101, USA\\
\NI Department of Liberal Arts, Savannah College of Arts and Design, Savannah, GA 31405, USA


\begin{thebibliography}{99}

\bibitem{AJK} G. Akagi, P. Juutinen and R. Kajikiya, \it Asymptotic behavior of viscosity solutions for a degenerate parabolic equation associated with the infinity-Laplacian, 
\rm Math.Ann. 343 (2009), no 4, 921-953.

\bibitem{BM1} T. Bhattacharya and L. Marazzi, {\it On the viscosity solution to a parabolic equation,} Annali di Matematica Pura ed Applicata, vol 194, no 5, 2014.
DOI:10.1007/s10231-014-0427-1

\bibitem{BM2} T. Bhattacharya and L. Marazzi, {\em On the viscosity solutions to Trudinger's equation},
Nonlinear Differential equations and applications (NoDEA), vol 22, no 5, 2015.
DOI:10.1007/s00030-015-0315-4

\bibitem{BM3} T. Bhattacharya and L. Marazzi, {\em Asymptotics of viscosity solutions of some doubly nonlinear parabolic eqns.} J. of Evol. Eqns. {\bf 16} (4), 759--788. (2016) DOI:10.1007/s00026-015-0319-x

\bibitem{BM4} T. Bhattacharya and L. Marazzi, {{\it Errata to "On the viscosity solutions to Trudinger's equation".} Nonlinear Diff. Eqns. Appl.(2016) 23:68}

\bibitem{BM5} T. Bhattacharya and L. Marazzi, {\it On the viscosity solution to a class of nonlinear degenerate Parabolic differential equations.} {Rev. Mat. Complut. (2017) 30:621--656.}

\bibitem{BM6} T. Bhattacharya and L. Marazzi, {\it A Phragm\'en-Lindel\"of property of viscosity solutions to a class of doubly nonlinear parabolic equations I.} Preprint arxiv.org/abs/1805.05861 (2018). 

\bibitem{CIL} M. G. Crandall, H. Ishii and P. L. Lions,\it User's guide to viscosity solutions of second order partial differential equations,\rm Bull. Amer. Math. Soc. 27(1992) 1-67.

\bibitem{ED} E. DiBenedetto, \it Degenerate Parabolic Equations, \rm Universitext, Springer (1993) 

\bibitem{JL}  P. Juutinen and P. Lindqvist, \it Pointwise decay for the solutions of degenerate and singular parabolic equations, \rm Adv. Differential Equations 14(2009), no. 7-8, 663-684.

\bibitem{TR} N. S. Trudinger, \it Pointwise estimates and quasilinear parabolic equations, \rm Comm. Pure Appl. Math. 21, 205-226 (1968) 

\bibitem{Tych} A. Tychonoff, {\em Th\'eor\`emes d'unicit \'e pour l’ \'equation de la chaleur,}
Mat. Sb., 1935, Volume 42, Number 2, 199-216.

\end{thebibliography}
\end{document}